\documentclass[11pt,a4paper,twoside]{article}
\usepackage[hmarginratio=1:1,bmargin=1.3in]{geometry}
\usepackage[OT2,T1]{fontenc}
\usepackage{amsmath, amsthm}
\usepackage{amsfonts}
\usepackage{amssymb}
\usepackage[all]{xy}
\usepackage{graphicx}
\usepackage{color}
\usepackage[nottoc, notlof, notlot]{tocbibind}
\usepackage{array}
\usepackage{url}
\usepackage{rotating}
\usepackage{fancyhdr}
\usepackage{fancyhdr}
\usepackage{titlesec}
\usepackage{xfrac} 
\usepackage{multicol}
\pagestyle{fancy}

\fancyhead[RO]{}
\fancyhead[LO]{\itshape\nouppercase{\leftmark}}
\fancyhead[RE]{\itshape\nouppercase{\leftmark}}
\fancyhead[LE]{}

\title{Homogeneous spaces, algebraic $K$-theory and cohomological dimension of fields}
\author{Diego Izquierdo\\
\small
Max-Planck-Institut f\"{u}r Mathematik \\
\small
\texttt{izquierd@mpim-bonn.mpg.de}
\and
 Giancarlo Lucchini Arteche\\
 \small
 Universidad de Chile\\
  \small
\texttt{luco@uchile.cl} 
 }
 
\date{}

\titleformat{\section}[hang]{\center\Large\bf}{\thesection.}{0.5cm}{}

\theoremstyle{plain}
\newtheorem{theorem}{Theorem}[section]
\newtheorem*{MT}{Main Theorem}
\newtheorem{lemma}[theorem]{Lemma}
\newtheorem{proposition}[theorem]{Proposition}
\newtheorem{corollary}[theorem]{Corollary}
\newtheorem{definition}[theorem]{Definition}

\theoremstyle{definition}
\newtheorem{remarque}[theorem]{Remark}

\newcommand \br {{\rm{Br\,}}}

\newcommand{\M}{\mathcal{M}}
\newcommand{\tors}{\mathrm{TORS}}
\newcommand{\HS}{\mathrm{HS}}
\newcommand{\PHS}{\mathrm{PHS}}
\newcommand{\Red}{\mathrm{Red}}
\newcommand{\ab}{\mathrm{ab}}
\newcommand{\bb}[1]{\mathbb{#1}}

\newcommand \ie {\textit{i.e.~}}

\renewcommand{\ker}{\mathrm{Ker}}

\usepackage{marvosym}

\begin{document}

\maketitle

\begin{abstract}
Let $q$ be a non-negative integer. We prove that a perfect field $K$ has cohomological dimension at most $q+1$ if, and only if, for any finite extension $L$ of $K$ and for any homogeneous space $Z$ under a smooth linear connected algebraic group over $L$, the $q$-th Milnor $K$-theory group of $L$ is spanned by the images of the norms coming from finite extensions of $L$ over which $Z$ has a rational point. We also prove a variant of this result for imperfect fields.\\

\textbf{MSC Classes:} primary 12G10, 19D45; secondary 11E72, 14M17.

\textbf{Keywords:} Cohomological dimension, homogeneous spaces, algebraic $K$-theory.
\end{abstract}

\section{Introduction}

In 1986, in the article \cite{KK}, Kato and Kuzumaki stated a set of conjectures which aimed at giving a diophantine characterization of cohomological dimension of fields. For this purpose, they introduced some properties of fields which are variants of the classical $C_i$-property and which involve Milnor $K$-theory and projective hypersurfaces of small degree. They hoped that those properties would characterize fields of small cohomological dimension.\\

More precisely, fix a field $L$ and two non-negative integers $q$ and $i$. Let $K_q^M(L)$ be the $q$-th Milnor $K$-group of $L$. For each finite extension $L'$ of $L$, one can define a norm morphism $N_{L'/L}: K_q^M(L')\rightarrow K_q^M(L)$ (see Section 1.7 of \cite{Kat}). Thus, if $Z$ is a scheme of finite type over $L$, one can introduce the subgroup $N_q(Z/L)$ of $K_q^M(L)$ generated by the images of the norm morphisms $N_{L'/L}$ when $L'$ runs through the finite extensions of $L$ such that $Z(L') \neq \emptyset$. One then says that the field $L$ is $C_i^q$ if, for each $n \geq 1$, for each finite extension $L'$ of $L$ and for each hypersurface $Z$ in $\mathbb{P}^n_{L'}$ of degree $d$ with $d^i \leq n$, one has $N_q(Z/L') = K_q^M(L')$. For example, the field $L$ is $C_i^0$ if, for each finite extension $L'$ of $L$, every hypersurface $Z$ in $\mathbb{P}^n_{L'}$ of degree $d$ with $d^i \leq n$ has a 0-cycle of degree 1. The field $L$ is $C_0^q$ if, for each tower of finite extensions $L''/L'/L$, the norm morphism $N_{L''/L'}: K_q^M(L'')\rightarrow K_q^M(L')$ is surjective.\\

Kato and Kuzumaki conjectured that, for $i \geq 0$ and $q\geq 0$, a perfect field is $C_i^q$ if, and only if, it is of cohomological dimension at most $i+q$. This conjecture generalizes a question raised by Serre in \cite{GC} asking whether the cohomological dimension of a $C_i$-field is at most $i$. As it was already pointed out at the end of Kato and Kuzumaki's original paper \cite{KK}, Kato and Kuzumaki's conjecture for $i=0$ follows from the Bloch-Kato conjecture (which has been established by Rost and Voevodsky, cf.~\cite{Riou}): in other words, a perfect field is $C_0^q$ if, and only if, it is of cohomological dimension at most $q$. However, it turns out that the conjectures of Kato and Kuzumaki are wrong in general. For example, Merkurjev constructed in \cite{Mer} a field of characteristic 0 and of cohomological dimension 2 which did not satisfy property $C^0_2$. Similarly, Colliot-Th\'el\`{e}ne and Madore produced in \cite{CTM} a field of characteristic 0 and of cohomological dimension 1 which did not satisfy property $C^0_1$. These counter-examples were all constructed by a method using transfinite induction due to Merkurjev and Suslin. The conjecture of Kato and Kuzumaki is therefore still completely open for fields that usually appear in number theory or in algebraic geometry.\\

In 2015, in \cite{Wit}, Wittenberg proved that totally imaginary number fields and $p$-adic fields have the $C_1^1$ property. In 2018, in \cite{diego}, the first author also proved that, given a positive integer $n$, finite extensions of $\mathbb{C}(x_1,...,x_n)$ and of $\mathbb{C}(x_1,...,x_{n-1})((t))$ are $C_i^q$ for any $i,q \geq 0$ such that $i+q =n$. These are essentially the only known cases of Kato and Kuzumaki's conjectures. In particular, the $C_1^1$ property is still unknown for several usual fields with cohomological dimension 2, such as the field of rational functions $\mathbb{C}((t))(x)$ or the field of Laurent series $\mathbb{C}((x,y))$.\\

In the present article, for each non negative integer $q$, we introduce variants of the $C_1^q$ property and we prove that, contrary to the $C_1^q$ property, they characterize the cohomological dimension of fields. More precisely, we say that a field $L$ is $C_{\HS}^q$ if, for each finite extension $L'$ of $L$ and for each homogeneous space $Z$ under a smooth linear connected algebraic group over $L'$, one has $N_q(Z/L') = K_q^M(L')$. Similarly, we say that a field $L$ is $C_{\PHS}^q$ (resp.~$C_{\Red}^q$) if, for each finite extension $L'$ of $L$ and for each principal homogeneous space $Z$ under a smooth linear connected (resp.~reductive) algebraic group over $L'$, one has $N_q(Z/L') = K_q^M(L')$.  Our main theorem is the following (please refer to section \ref{gener} for the definitions of the cohomological dimension and the separable cohomological dimension):

\begin{MT}
Let $q$ be a non-negative integer. 
\begin{itemize}
\item[(i)]  A perfect field has the $C_{\HS}^q$-property if, and only if, it has cohomological dimension at most $q+1$.
\item[(ii)]  An imperfect field has the $C_{\Red}^q$-property if, and only if, all its finite extensions have separable cohomological dimension at most $q+1$. 
\end{itemize}
\end{MT}

\begin{remarque}
In fact, we will see in section \ref{chs} that the properties $C_\HS^q$ and $C_\Red^q$ are much stronger than what is actually needed to prove that a field has cohomological dimension at most $q+1$. More precisely, we will prove that (cf.~Remark \ref{rem SB}):
\begin{enumerate}
\item A field $K$ has cohomological dimension at most $q+1$ if, for any tower of finite field extensions $M/L/K$ and any element $a\in L^{\times}$, we have $N_q(Z/L)=K_q^M(L)$ for the $L$-variety $Z$ defined by the normic equation $N_{M/L}(\mathbf{x})=a$.
\item A field $K$ has separable cohomological dimension at most $q+1$ if $N_q(Z/L)=K_q^M(L)$ for any positive integer $n$, any finite separable extension $L/K$ and any $\mathrm{PGL}_{n,L}$-torsor $Z$.
\end{enumerate}
\end{remarque}

Our Main Theorem, together with the previous remark, unifies and significantly generalizes several results in the literature:
\begin{itemize}
\item[$\bullet$] The theorems of Steinberg and Springer (see Section III.2.4 of \cite{GC}), which state that, if $K$ is a perfect field with cohomological dimension at most one, then every homogeneous space under a linear connected $K$-group has a zero-cycle of degree 1 (and even a rational point).
\item[$\bullet$] A theorem of Suslin, which states that a field $K$ of characteristic 0 has cohomological dimension at most 2 if, and only if, for any finite extension $L$ of $K$ and any central simple algebra $A$ over $L$, the reduced norm $\mathrm{Nrd}: A^{\times} \rightarrow L^{\times}$ is surjective (see Corollary 24.9 of \cite{suslin} or Theorem 8.9.3 of \cite{GS}).
\item[$\bullet$] A result of Gille which generalizes Suslin's theorem to positive characteristic fields (Theorem 7 of \cite{gille}).
\item[$\bullet$] Two theorems of Wittenberg, which state that $p$-adic fields and totally imaginary number fields have the property $C_{\HS}^1$ (see Corollaries 5.6 and 5.8 of \cite{Wit}).
\end{itemize}

Since our result applies to all fields, our proof has to be purely algebraic and/or geometric: contrary to the cases of $p$-adic fields and number fields dealt by Wittenberg, we have to systematically avoid arithmetical arguments. In section \ref{gener}, we gather some generalities about the cohomological dimension of fields. In particular, we recall the ``good'' definition of the cohomological dimension for positive characteristic fields as well as a characterization of this invariant in terms of Milnor $K$-theory. In section \ref{chs}, we settle the easy direction of our main theorem: in other words, we prove that a field $L$ having the $C_{\Red}^q$-property has separable cohomological dimension at most $q+1$. The core of the article is section \ref{main}, where we prove our main theorem in three steps:

\begin{itemize}
\item[(1)] We first prove that, if $P$ is a finite Galois module over a characteristic 0 field $L$ of cohomological dimension $\leq q+1$ and $\alpha$ is an element in $H^2(L,P)$, then $K_q^M(L)$ is spanned by the images of the norms coming from finite extensions of $L$ trivializing $\alpha$. This requires to use the Bloch-Kato conjecture but also some properties of norm varieties that have been established by Rost and Suslin.
\item[(2)] We then use a theorem of Steinberg and step (1) to deal with principal homogeneous spaces over characteristic zero fields;  the case of principal homogeneous spaces over positive characteristic fields is then solved by reducing to the characteristic 0 case.
\item[(3)] We finally deal with the case of homogeneous spaces over perfect fields by using a theorem of Springer in non-abelian cohomology which reduces us to the case with finite solvable stabilizers. We deal with this last case by using ``d\'evissage'' techniques in non-abelian cohomology and step (2).
\end{itemize}

Given a field $L$, if we denote by $\mathrm{cd}(L)$ and by $\mathrm{sd}(L)$ the cohomological dimension and the separable cohomological dimension of $L$ respectively, the following diagram summarizes the implications we settle along the proof:

 \begin{multicols}{2}
 \centering
Case when $L$ is perfect:
\[\xymatrix{
&C_1^q \ar@{=>}[d]^{\text{Prop. \ref{onedir}}} \\ 
&\mathrm{cd}(L) \leq q+1 \ar@{=>}[d]^{\text{Th. \ref{poscar}(i)}} \\
C^q_{\Red} \ar@{=>}[ur]^{\text{Prop. \ref{onedir}}} &C^q_{\PHS} \ar@{=>}@<1ex>[d]^{\text{Section \ref{last}}} \ar@{=>}[l]^{\text{obvious}} \\
&C^q_{\HS} \ar@{=>}@<1ex>[u]^{\text{obvious}}
}\]

Case when $L$ is imperfect:
\[\xymatrix{
C_1^q \ar@{=>}[d]^{\text{Prop. \ref{onedir}}}\\
\forall L'/L\text{ finite, }\;\mathrm{sd}(L') \leq q+1 \ar@{=>}@<1ex>[d]^{\text{Th. \ref{poscar}(ii)}}\\
C^q_{\Red} \ar@{=>}@<1ex>[u]^{\text{Prop. \ref{onedir}}}
}\]
\end{multicols}

\subsection*{Preliminaries on Milnor $K$-theory}

Let $L$ be any field and let $q$ be a non-negative integer. The $q$-th Milnor $K$-group of $L$ is by definition the group $K_0^M(L)=\mathbb{Z}$ if $q =0$ and:
$$K_q^M(L):= \underbrace{L^{\times} \otimes_{\mathbb{Z}} ... \otimes_{\mathbb{Z}} L^{\times}}_{q \text{ times}} / \left\langle x_1 \otimes ... \otimes x_q | \exists i,j, i\neq j, x_i+x_j=1 \right\rangle$$
if $q>0$. For $x_1,...,x_q \in L^{\times}$, the symbol $\{x_1,...,x_q\}$ denotes the class of $x_1 \otimes ... \otimes x_q$ in $K_q^M(L)$. More generally, for $r$ and $s$ non-negative integers such that $r+s=q$, there is a natural pairing:
$$K_r^M(L) \times K_s^M(L) \rightarrow K_q^M(L)$$
which we will denote $\{\cdot, \cdot\}$.\\

When $L'$ is a finite extension of $L$, one can construct a norm homomorphism
\[N_{L'/L}: K_q^M(L') \rightarrow K_q^M(L),\]
satisfying the following properties (see Section 1.7 of \cite{Kat} or section 7.3 of \cite{GS}):
\begin{itemize}
\item[$\bullet$] For $q=0$, the map $N_{L'/L}: K_0^M(L') \rightarrow K_0^M(L)$ is given by multiplication by $[L':L]$.
\item[$\bullet$] For $q=1$, the map $N_{L'/L}: K_1^M(L') \rightarrow K_1^M(L)$ coincides with the usual norm $L'^{\times} \rightarrow L^{\times}$.
\item[$\bullet$] If $r$ and $s$ are non-negative integers such that $r+s=q$, we have $N_{L'/L}(\{x,y\})=\{x,N_{L'/L}(y)\}$ for $x \in K_r^M(L)$ and $y\in K_s^M(L')$.
\item[$\bullet$] If $L''$ is a finite extension of $L'$, we have $N_{L''/L} = N_{L'/L} \circ N_{L''/L'}$.
\end{itemize}
\vspace{13pt}
For each $L$-scheme of finite type, we denote by $N_q(Z/L)$ the subgroup of $K_q^M(L)$ generated by the images of the maps $N_{L'/L}: K_q^M(L') \rightarrow K_q^M(L)$ when $L'$ runs through the finite extensions of $L$ such that $Z(L')\neq \emptyset$. In particular, $N_0(Z/L)$ is the subgroup of $\mathbb{Z}$ generated by the index of $Z$ (\ie the gcd of the degrees $[L':L]$ when $L'$ runs through the finite extensions of $L$ such that $Z(L') \neq \emptyset$). 

\section{Generalities on the cohomological dimension}\label{gener}

We start the article by recalling the ``good'' definition of the cohomological dimension for fields of any characteristic:

\begin{definition}
Let $K$ be any field. 
\begin{itemize}
\item[(i)] Let $\ell$ be a prime number different from the characteristic of $K$. The $\ell$-cohomological dimension $\mathrm{cd}_{\ell}(K)$ and the separable $\ell$-cohomological dimension $\mathrm{sd}_{\ell}(K)$ of $K$ are both the $\ell$-cohomological dimension of the absolute Galois group of $K$.
\item[(ii)] (Kato, \cite{Kato}; Gille, \cite{gille}). Assume that $K$ has characteristic $p>0$. Let $\Omega^i_K$ be the $i$-th exterior product over $K$ of the absolute differential module $\Omega^1_{K/\mathbb{Z}}$ and consider the morphism $\mathfrak{p}^i_K\,:\, \Omega^i_K \to \Omega^i_K/d(\Omega^{i-1}_K)$ defined by
\[x \frac{dy_1}{y_1} \wedge ... \wedge \frac{dy_i}{y_i} \mapsto (x^p-x) \frac{dy_1}{y_1} \wedge ... \wedge \frac{dy_i}{y_i} \mod d(\Omega^{i-1}_K),\]
for $x\in K$ and $y_1,...,y_i \in K^{\times}$: this morphism is well-defined by sections 9.2 and 9.4 of \cite{GS}. Let $H^{i+1}_p(K)$ be the cokernel of $\mathfrak{p}^i_K$. The $p$-cohomological dimension $\mathrm{cd}_{p}(K)$ of $K$ is the smallest integer $i$ (or $\infty$ if such an integer does not exist) such that $[K:K^p]\leq p^i$ and $H^{i+1}_p(L)=0$ for all finite extensions $L$ of $K$. The separable $p$-cohomological dimension $\mathrm{sd}_{p}(K)$ of $K$ is the smallest integer $i$ (or $\infty$ if such an integer does not exist) such that $H^{i+1}_p(L)=0$ for all finite separable extensions $L$ of $K$.
\item[(iii)] The cohomological dimension $\mathrm{cd}(K)$ of $K$ is the supremum of all the $\mathrm{cd}_{\ell}(K)$'s when $\ell$ runs through all prime numbers. The separable cohomological dimension $\mathrm{sd}(K)$ of $K$ is the supremum of all the $\mathrm{sd}_{\ell}(K)$'s when $\ell$ runs through all prime numbers. 
\end{itemize}
\end{definition}

The following proposition is probably well known to experts, but we didn't find an appropriate reference covering the positive characteristic case:

\begin{proposition}\label{cohdim}
Let $q$ be a non-negative integer, let $\ell$ be a prime number and let $K$ be any field. 
\begin{itemize}
\item[(i)] Assume that, for any tower of finite extensions $M/L/K$, the cokernel of the norm $N_{M/L}: K_q^M(M)\rightarrow K_q^M(L)$ has no $\ell$-torsion. Then $K$ has $\ell$-cohomological dimension at most $q$.
\item[(ii)] Assume that, for any tower of finite separable extensions $M/L/K$, the cokernel of the norm $N_{M/L}: K_q^M(M)\rightarrow K_q^M(L)$ has no $\ell$-torsion. Then $K$ has separable $\ell$-cohomological dimension at most $q$.
\end{itemize}
\end{proposition}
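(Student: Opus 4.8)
The plan is to reduce the statement to a known characterization of ($\ell$-)cohomological dimension in terms of Milnor $K$-theory modulo $\ell$, namely the Bloch--Kato type statement recalled earlier that a field $F$ has $\mathrm{cd}_\ell(F) \leq q$ (resp. $\mathrm{sd}_\ell(F) \leq q$) if and only if $K_{q+1}^M(F)/\ell = 0$ for all finite (resp. finite separable) extensions $F$ of $K$, together with the corresponding statement at level $q$ when $\ell = p$ involving $H^{q+1}_p$ and the condition $[F:F^p] \leq p^q$. So the real content is: the hypothesis on cokernels of norms at level $q$ forces the vanishing of $K_{q+1}^M$ modulo $\ell$ (and, in the wild case, the bound on $[F:F^p]$) for all relevant extensions.

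First I would treat the case $\ell \neq p = \mathrm{char}(K)$ (including $p = 0$). Fix a finite extension $L/K$; I want $K_{q+1}^M(L)/\ell = 0$. Pick any $\eta = \{a, b_1, \dots, b_q\} \in K_{q+1}^M(L)$ with $a \in L^\times$; by multilinearity and the fact that symbols generate, it suffices to kill such symbols. Take $M = L(a^{1/\ell})$, a degree-dividing-$\ell$ extension. Over $M$ the element $a$ becomes an $\ell$-th power, so $\{b_1, \dots, b_q\}_M$ is in the image of $N_{M/L}$ after... more precisely, the projection formula gives $N_{M/L}(\{a^{1/\ell}, b_1, \dots, b_q\}_M) = \{a, b_1, \dots, b_q\}_L$ when $[M:L] = \ell$ (using $N_{M/L}(a^{1/\ell}) = \pm a$), so $\eta \in \mathrm{im}(N_{M/L})$ on the nose; the point rather is to show every element of $K_q^M(L)$ modulo the image becomes trivial, i.e. to combine the hypothesis for the tower $M/L/K$ with a descent. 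The cleaner route: the hypothesis says $\mathrm{coker}(N_{M/L})$ has no $\ell$-torsion for every such tower; one shows by a standard argument (transfer composed with restriction is multiplication by $[M:L]$, plus the $\ell$-divisibility of $K_q^M(L)/N_{M/L}K_q^M(M)$ when $[M:L]$ is prime to $\ell$) that this is equivalent to $K_q^M(L)$ being, up to the relevant torsion, "$\ell$-divisible along degree-$\ell$ extensions", and then a Bass--Tate style symbol-chasing argument (as in the proof that $C_0^q \Rightarrow \mathrm{cd} \leq q$) upgrades this to $K_{q+1}^M(L)/\ell = 0$. Invoking the Bloch--Kato theorem then gives $\mathrm{cd}_\ell(L) \leq q$, hence $\mathrm{cd}_\ell(K) \leq q$.

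For the wild case $\ell = p > 0$ in part (i), I would run a parallel argument with $H^{q+1}_p$ in place of $K_{q+1}^M/p$, using the differential-forms description: a class $x\, \frac{dy_1}{y_1}\wedge\cdots\wedge\frac{dy_q}{y_q}$ in $H^{q+1}_p(L)$ is killed by passing to an Artin--Schreier extension $M = L(\wp^{-1}(x))$ of degree $p$, and the compatibility of $H^{q+1}_p$ with norms/traces (the trace on differentials) together with the no-$p$-torsion hypothesis on the cokernel of $N_{M/L}$ forces the class to vanish. Separately one must produce the inequality $[L:L^p] \leq p^q$: here I would argue that if $[L:L^p] \geq p^{q+1}$, one can find $y_1, \dots, y_{q+1} \in L^\times$ that are $p$-independent, and then the symbol $\{y_1,\dots,y_{q+1}\}$ (or the corresponding differential form with $x=1$... rather with suitable $x$) gives a nonzero $p$-torsion obstruction in some cokernel of a norm, contradicting the hypothesis — this uses that for a $p$-basis element the relevant norm map cannot be surjective modulo $p$. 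Part (ii) is identical, restricting all extensions to separable ones throughout and reading off $\mathrm{sd}_p$, which by definition drops the $[L:L^p]$ condition, so that subtlety disappears.

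The main obstacle I anticipate is the wild ($\ell = p$) part of (i), and specifically extracting the bound $[L:L^p] \leq p^q$ cleanly from a statement purely about norms in Milnor $K$-theory: one has to relate $p$-independence of elements to non-surjectivity (modulo $p$) of a concrete norm map, which is where the interplay between $K$-theory mod $p$, logarithmic differentials, and the Frobenius is most delicate; the characteristic-zero and the tame parts should be routine Bass--Tate symbol manipulations combined with Bloch--Kato. I would also need to be a little careful that the hypothesis is stated for \emph{all} towers $M/L/K$, so that after replacing $K$ by a finite extension $L$ the hypothesis is inherited — which it is, since a tower over $L$ is a tower over $K$.
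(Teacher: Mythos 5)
Your overall strategy is the paper's: for $\ell$ different from the characteristic you adjoin an $\ell$-th root of one slot of the symbol, use the no-$\ell$-torsion hypothesis (the cokernel is killed by $[M:L]$, so absence of $\ell$-torsion forces every element of $K_q^M(L)$ to be a norm from $M$ modulo $\ell K_q^M(L)$) together with the projection formula to get $K_{q+1}^M(L)/\ell=0$ for every finite (resp.\ finite separable) extension $L$, and conclude by Bloch--Kato and Serre's criterion; for $\ell=p>0$ you kill a class of $H^{q+1}_p(L)$ by passing to the Artin--Schreier extension $M=L(\wp^{-1}(x))$ and using the compatibility of the norm with the trace through the Bloch--Gabber--Kato differential symbol. (Your initial ``on the nose'' observation that the symbol is itself a norm from $L(a^{1/\ell})$ is indeed beside the point, but you correct course; these parts of the sketch are in substance the paper's proof.)

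The genuine gap is the step you yourself flag: the bound $[L:L^p]\le p^q$ in part (i). You reduce it to the assertion that for $p$-independent $x_1,\dots,x_{q+1}$ ``the relevant norm map cannot be surjective modulo $p$'', but that assertion is precisely what must be proved, and nothing in your sketch produces it. Note first that the hypothesis actually gives honest surjectivity of $N_{K(\sqrt[p]{x_{q+1}})/K}$ (the cokernel is killed by $p$, hence trivial if it has no $p$-torsion), so the point is to derive a contradiction from $\{x_1,\dots,x_q\}$ being such a norm. The paper does this by base-changing to a separable closure $K^s$: since every finite extension of $K^s$ has $p$-power degree, $K_q^M\bigl(K^s(\sqrt[p]{x_{q+1}})\bigr)$ is generated by symbols with at most one entry outside $K^s$ (Corollary 7.2.10 of \cite{GS}), so the norm of any such generator is a symbol whose first entry lies in $(K^s)^p(x_{q+1})$; transporting this through the differential symbol shows that $\frac{dx_1}{x_1}\wedge\dots\wedge\frac{dx_q}{x_q}$ lies in the $K^s$-span of $dx_{q+1}\wedge\Omega^{q-1}_{K^s}$ inside $\Omega^q_{K^s}$, which contradicts $p$-independence by Theorem 26.5 of \cite{Mat}. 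Without an argument of this kind (passage to $K^s$ plus the structure of Milnor $K$-theory of degree-$p$ extensions there, plus the translation into differential forms), the wild half of (i) is not established; the remainder of your plan, including the remark that (ii) avoids this issue because the separable $p$-cohomological dimension drops the $[L:L^p]$ condition and the Artin--Schreier extensions used are separable, matches the paper.
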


\begin{proof}
The proofs of (i) and (ii) being very similar, we only prove (i). \\

First assume that $\ell$ is different from the characteristic of $K$. In that case, the proposition is essentially a consequence of Lemma 7 of \cite{KK}. But we are going to give a full proof since that lemma is only stated without proof. To do so, consider a finite extension $L$ of $K$ and a symbol $\{a_1, ..., a_{q+1}\} \in K_{q+1}^M(L)$. Let $M$ be the splitting field of the polynomial $T^{\ell}-a_{q+1}$. Since the cokernel of $N_{M/L}: K_q^M(M)\rightarrow K_q^M(L)$ has no $\ell$-torsion, we can find $b \in K_q^M(M)$ such that $\{a_1, ..., a_{q}\} \equiv N_{M/L}(b) \mod \ell K_q^M(L)$. Hence $\{a_1, ..., a_{q+1}\} \equiv N_{M/L}(\{b,a_{q+1}\})\mod \ell K_{q+1}^M(L) $ and $\{a_1, ..., a_{q+1}\} \in \ell K_{q+1}^M(L) $. By the Bloch-Kato conjecture, we deduce that the group $H^{q+1}(L,\mu_{\ell}^{\otimes (q+1)})$ is trivial. This being true for any finite extension $L$ of $K$, Corollary I.3.3.1 and Proposition I.4.1.21 of \cite{GC} then imply that $K$ has $\ell$-cohomological dimension at most $q$.\\

Now assume that $\ell$ is the characteristic of $K$. We first prove that $H^{q+1}_{\ell}(L)=0$ for all finite extensions $L$ of $K$. We therefore consider such a finite extension $L$ of $K$ and an element $x \frac{dy_1}{y_1} \wedge ... \wedge \frac{dy_{q}}{y_{q}}$ of $\Omega^{q}_L$ with $x\in L$ and $y_1,...,y_{q} \in L^{\times}$. Let $M$ be the splitting field of the polynomial $T^\ell-T-x$ over $L$. By assumption, we can find an element $z\in K_q^M(M)$ such that $\{y_1,...,y_q\} \equiv N_{M/L}(z) \mod \ell K_q^M(L)$. Now recall the Bloch-Gabber-Kato theorem, which states that, if $\nu(q)_L$ denotes the kernel of $\mathfrak{p}^q_L$, the differential symbol:
\begin{align*}
\psi_L^q: K_q^M(L)/\ell K_q^M(L) & \to \nu(q)_L\\
\{a_1,...,a_q\} & \mapsto \frac{da_1}{a_1} \wedge ... \wedge \frac{da_q}{a_q},
\end{align*}
is an isomorphism (Theorem 9.5.2 of \cite{GS}). Through this isomorphism, the norm on Milnor $K$-theory corresponds to the trace on modules of differential $q$-forms (Lemma 9.5.7 of \cite{GS}). Hence we can find an element $z'\in \nu(q)_M$ such that $\frac{dy_1}{y_1} \wedge ... \wedge \frac{dy_q}{y_q} = \mathrm{tr}_{M/L}(z')$. By our choice of the field $M$, we know that the class of $xz'$ modulo $d(\Omega^{q-1}_M)$ is in the image of $\mathfrak{p}_M^q$. We can therefore conclude that the class of $x \frac{dy_1}{y_1} \wedge ... \wedge \frac{dy_{q}}{y_{q}}$ modulo $d(\Omega^{q-1}_L)$ is in the image of $\mathfrak{p}_L^q$ by observing that the following diagram is commutative:
\[\xymatrix{
\Omega^q_M \ar[r]^-{\mathfrak{p}^q_M} \ar[d]_{\mathrm{tr}_{M/L}} & \Omega^q_M/d(\Omega^{q-1}_M) \ar[d]^{\mathrm{tr}_{M/L}} \\
\Omega^q_L \ar[r]^-{\mathfrak{p}^q_L}  & \Omega^q_L/d(\Omega^{q-1}_L).
}\]

We now prove that $[K:K^\ell]\leq \ell^q$. Assume the contrary, so that we can find $x_1,...,x_{q+1}$ a family of $q+1$ elements of $K$ which are $\ell$-independent (in the sense of section 26 of \cite{Mat}). By assumption, we can find $w\in K_q^M(K(\sqrt[\ell]{x_{q+1}}))$ such that $$N_{K(\sqrt[\ell]{x_{q+1}})/K}(w)=\{x_1,...,x_q\}.$$ We then also have $N_{K^s(\sqrt[\ell]{x_{q+1}})/K^s}(w)=\{x_1,...,x_q\}$, where $K^s$ denotes a separable closure of $K$. Since all finite extensions of $K^s$ have degree a power of $\ell$, Corollary 7.2.10 of \cite{GS} implies that $K_q^M(K^s(\sqrt[\ell]{x_{q+1}}))$ is spanned, as an abelian group, by elements of the form $\{ b_1,...,b_q \}$ with $b_2,...,b_{q}\in K^s$ and $b_1 \in K^s(\sqrt[\ell]{x_{q+1}})$. The image of such a symbol $\{ b_1,...,b_q \}$ by $N_{K^s(\sqrt[\ell]{x_{q+1}})/K^s}$ is the symbol $\{ N_{K^s(\sqrt[\ell]{x_{q+1}})/K^s}(b_1),b_2,...,b_q \}$, where $N_{K^s(\sqrt[\ell]{x_{q+1}})/K^s}(b_1) \in (K^s)^\ell(x_{q+1})$. Hence $\{x_1,...,x_q\}$ lies in the subgroup of $K_q^M(K^s)$ spanned by elements of the form $\{c_1,...,c_q\}$ with $c_1 \in (K^s)^\ell(x_{q+1})$. In other words, by the Bloch-Gabber-Kato theorem, $\frac{dx_1}{x_1} \wedge ... \wedge \frac{dx_q}{x_q}$ lies in the sub-$K^s$-vector space of $\Omega^q_{K^s}$ spanned by the $dx_{q+1} \wedge \omega$ with $\omega\in \Omega^{q-1}_{K^s}$. By Theorem 26.5 of \cite{Mat}, this contradicts the $\ell$-independence of the family $x_1,...,x_{q+1}$.
\end{proof}

\section{The $C_{\HS}^q$, $C_{\PHS}^q$ and $C_{\Red}^q$ properties} \label{chs}

Let $q$ be a non-negative integer. In the article \cite{KK}, Kato and Kuzumaki introduce the notion of $C_1^q$ fields: they say that a field $K$ satisfies the $C_1^q$ property if, for every finite extension $L$ of $K$ and for every hypersurface $Z$ in $\mathbb{P}^n_{L}$ of degree $d$ with $d \leq n$, we have $N_q(Z/L)=K_q^M(L)$. In this article, we are interested in the following variants of this property:

\begin{definition}
Let $q$ be a non-negative integer. We say that a field $K$ has the $C_{\HS}^q$ property if, for each finite extension $L$ of $K$ and for each homogeneous space $Z$ under a smooth linear connected algebraic group over $L$, one has $N_q(Z/L) = K_q^M(L)$. Similarly, we say that a field $K$ has the $C_{\PHS}^q$ property (resp. the $C_{\Red}^q$ property) if, for each finite extension $L$ of $K$ and for each principal homogeneous space $Z$ under a smooth linear connected (resp.~reductive) algebraic group over $L$, one has $N_q(Z/L) = K_q^M(L)$.
\end{definition}

Of course, the $C_{\HS}^q$ property implies the $C_{\PHS}^q$ property, which itself implies the $C_{\Red}^q$ property. The following proposition shows that a field satisfying those properties has small cohomological dimension:

\begin{proposition}\label{onedir}
Let $K$ be a field.
\begin{itemize}
\item[(i)] Assume that, for any tower of finite (resp. finite separable) extensions $M/L/K$ and any element $a \in L^{\times}$, we have $N_q(Z/L)=K_q^M(L)$ for the $L$-variety:
\[Z: N_{M/L}(\mathbf{x}) = a.\]
Then $K$ has cohomological dimension (resp. separable cohomological dimension) at most $q+1$.
\item[(ii)] Assume that, for any finite separable extension $L/K$ and any Severi-Brauer $L$-variety $Z$, we have $N_q(Z/L)=K_q^M(L)$. Then $K$ has separable cohomological dimension at most $q+1$.
\end{itemize}

\end{proposition}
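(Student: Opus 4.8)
The plan is to reduce both statements to Proposition \ref{cohdim} by producing, for a given tower $M/L/K$, a suitable variety whose $N_q$-group computes (a piece of) the cokernel of the norm $N_{M/L}: K_q^M(M)\to K_q^M(L)$. For part (i), fix a tower of finite (resp.\ finite separable) extensions $M/L/K$ and an element $a\in L^\times$, and let $Z$ be the affine $L$-variety defined by the normic equation $N_{M/L}(\mathbf{x})=a$. The first observation is that $Z$ has an $L'$-point, for $L'/L$ finite, exactly when $a$ becomes a norm from $M\otimes_L L'$ down to $L'$; in particular $Z(M)\neq\emptyset$ trivially (take $\mathbf{x}$ with $N_{M/L}(\mathbf x)=a$ after base change, using that $M\otimes_L M$ has $M$ as a factor), so $N_{M/L}\big(K_q^M(M)\big)\subseteq N_q(Z/L)$. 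Conversely, if $L'/L$ is finite with $Z(L')\neq\emptyset$, then I claim the image of $N_{L'/L}:K_q^M(L')\to K_q^M(L)$ lands in $N_{M/L}(K_q^M(M)) + \langle\text{stuff involving }a\rangle$; more precisely, writing $a$ as a norm from $M\otimes_L L'$ and using the projection formula $N_{L'/L}(\{a,y\})=\{a,N_{L'/L}(y)\}$ together with transitivity of norms, one shows $N_q(Z/L)$ is generated by $N_{M/L}(K_q^M(M))$ together with symbols $\{a\}\cdot K_{q-1}^M(L)$ — wait, this is not quite it. The cleaner route: apply the hypothesis $N_q(Z/L)=K_q^M(L)$ with $a=a_{q+1}$ an arbitrary element of $L^\times$, and pair: any $\xi\in K_q^M(L)$ is a sum of $N_{L'/L}(\eta_i)$ with $Z(L')\neq\emptyset$, hence $\{\xi, a_{q+1}\}=\{a_{q+1},\xi\}$ is a sum of $N_{L'/L}(\{a_{q+1},\eta_i\})$; since $a_{q+1}$ is a norm from $M\otimes_L L'$, each $\{a_{q+1},\eta_i\}$ lies in the image of a norm from a factor of $M\otimes_L L'$, and composing norms shows $\{a_{q+1},\xi\}\in N_{M/L}(K_q^M(M)) \pmod{\ell K_{q+1}^M(L)}$ for every $\ell$, hence (as $\xi$ ranges over all of $K_q^M(L)$ and $a_{q+1}$ over all of $L^\times$, and $K_{q+1}^M(L)$ is generated by such symbols) the cokernel of $N_{M/L}$ on $K_q^M$ has no torsion after pairing up — the precise bookkeeping is that $\mathrm{coker}\big(N_{M/L}:K_{q+1}^M(M)\to K_{q+1}^M(L)\big)=0$ follows, which by Proposition \ref{cohdim} (applied with $q+1$ in place of $q$) gives $\mathrm{cd}(K)\le q+1$ (resp.\ $\mathrm{sd}(K)\le q+1$).

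Let me restate the mechanism more carefully, since that is the crux. By Proposition \ref{cohdim}, it suffices to show that for every tower $M/L/K$ the cokernel of $N_{M/L}:K_{q+1}^M(M)\to K_{q+1}^M(L)$ has no $\ell$-torsion for every prime $\ell$; in fact it suffices (by the argument in the proof of \ref{cohdim}, reducing to symbols $\{a_1,\dots,a_{q+1}\}$ and taking $M$ to be a root field of $T^\ell-a_{q+1}$) to handle towers of the special form where $M=L(\sqrt[\ell]{a})$ or the splitting field of $T^\ell-T-x$, but it is cleaner to just prove surjectivity of $N_{M/L}$ on $K_{q+1}^M$ for all towers. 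So fix $M/L/K$ and $\xi=\{a_1,\dots,a_q,a_{q+1}\}\in K_{q+1}^M(L)$ (these generate). Apply the hypothesis to the normic variety $Z:\ N_{M/L}(\mathbf x)=a_{q+1}$: we get $\{a_1,\dots,a_q\}=\sum_i N_{L_i/L}(\eta_i)$ with each $L_i/L$ finite and $Z(L_i)\neq\emptyset$. Fix $i$ and put $M_i:=M\otimes_L L_i$; since $Z(L_i)\neq\emptyset$, the element $a_{q+1}$ lies in the image of $N_{M_i/L_i}:M_i^\times\to L_i^\times$ (interpreting $N_{M_i/L_i}$ componentwise and multiplying), so $\{a_{q+1}\}=N_{M_i/L_i}(\mu_i)$ for some $\mu_i\in K_1^M(M_i)$, i.e.\ $\mu_i$ a tuple over the factors of $M_i$. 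Then
\[
\{a_1,\dots,a_q,a_{q+1}\}=\{a_{q+1},a_1,\dots,a_q\}
= \sum_i \{a_{q+1},N_{L_i/L}(\eta_i)\}
= \sum_i N_{L_i/L}\big(\{a_{q+1},\eta_i\}\big)
= \sum_i N_{L_i/L}\big(N_{M_i/L_i}(\mu_i)\cdot\eta_i\big),
\]
where in the last step I use the projection formula, and $N_{L_i/L}\circ N_{M_i/L_i}=N_{M_i/L}$ factors through $M$ (each factor of $M_i$ contains $M$), so every term lies in $N_{M/L}(K_{q+1}^M(M))$. Hence $\xi\in\mathrm{im}(N_{M/L})$, proving surjectivity and therefore, by Proposition \ref{cohdim}(i) (resp.\ (ii)) applied with $q+1$, that $\mathrm{cd}(K)\le q+1$ (resp.\ $\mathrm{sd}(K)\le q+1$).

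For part (ii), the point is that any root field of $T^\ell-a$ can be realized inside the setup of Severi--Brauer varieties of cyclic algebras, but more directly: a Severi--Brauer variety $Z=\mathrm{SB}(A)$ of a central simple $L$-algebra $A$ has an $L'$-point (for $L'/L$ finite) iff $A\otimes_L L'$ is split, and $\mathrm{SB}(A)$ acquires a point over any maximal subfield of $A$, so $N_q(\mathrm{SB}(A)/L)$ contains $N_{L'/L}(K_q^M(L'))$ for all splitting fields. To reduce to part (i)'s mechanism, given a tower $M/L/K$ with $M/L$ \emph{cyclic} of degree $\ell$ (it suffices to treat these by the reduction in the proof of \ref{cohdim}, plus passing to the case where $\mu_\ell\subset L$ — one handles the prime-to-$\ell$ and $\ell=\mathrm{char}$ subtleties exactly as in that proof) and $a\in L^\times$, the cyclic algebra $A=(M/L,\sigma,a)$ has Severi--Brauer variety $Z$ with $Z(L')\neq\emptyset \iff a\in N_{M\otimes_L L'/L'}\big((M\otimes_L L')^\times\big)$, i.e.\ $Z$ has exactly the same finite splitting fields as the normic variety of part (i); so $N_q(Z/L)=K_q^M(L)$ gives the same conclusion. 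The expected main obstacle is the careful handling of the characteristic-$\ell$ case and the reduction to cyclic/root-field towers: in equal characteristic $\ell$ one cannot form $T^\ell-a$ but must use Artin--Schreier extensions and $p$-algebras (differential forms, Bloch--Gabber--Kato, as in the proof of Proposition \ref{cohdim}), and for Severi--Brauer varieties one must check that the normic description of $L'$-points survives componentwise over the not-necessarily-field ring $M\otimes_L L'$ and that $N_{M_i/L_i}$ decomposed over factors still composes to a norm from $M$; these are routine but must be done with care.
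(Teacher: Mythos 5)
Your part (i) is correct and is in substance the paper's own argument: reduce via Proposition \ref{cohdim} to surjectivity of $N_{M/L}$ on $K_{q+1}^M$, feed one slot of the symbol into the normic variety, and conclude with the projection formula and transitivity of norms through the factors of $M\otimes_L L_i$ (the paper does the same computation with $a=a_1$ instead of $a=a_{q+1}$; like you, it treats the algebra $M\otimes_L L_i$ somewhat informally in the inseparable case).

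For part (ii) you take a genuinely different route. The paper never passes through norm surjectivity or Proposition \ref{cohdim}: for $\ell\neq\mathrm{char}\,K$ it fixes $L\supseteq\mu_\ell$, applies the hypothesis to the Severi--Brauer variety of the cyclic algebra attached to $\{a_1,a_2\}$, writes $\{a_3,\dots,a_{q+2}\}=\sum_i \mathrm{Cores}_{L_i/L}(b_i)$ with $\{a_1,a_2\}|_{L_i}=0$, and kills the class in one line via $\sum_i\mathrm{Cores}_{L_i/L}(\{a_1,a_2\}|_{L_i}\cup b_i)=0$; for $\ell=\mathrm{char}\,K$ it runs the same two-line argument with the $p$-algebra corresponding to $x\,\frac{dy_1}{y_1}$ (Theorem 9.2.4 of \cite{GS}), Bloch--Gabber--Kato and the trace. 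You instead observe that for $M/L$ cyclic of degree $\ell$ the Severi--Brauer variety of $(M/L,\sigma,a)$ has exactly the same finite splitting fields as the normic variety of part (i), and then rerun the mechanism of (i) for these special towers before re-entering the argument of Proposition \ref{cohdim}. This works, with two caveats you partly acknowledge: you cannot cite the statement of Proposition \ref{cohdim} (your hypothesis only controls root-field/cyclic towers), so its proof must indeed be inlined, including the final appeal to Serre (Corollary I.3.3.1, Proposition I.4.1.21 of \cite{GC}) and the harmless passage to $L(\mu_\ell)$; and your characteristic-$\ell$ case is only a sketch --- the needed facts (that $[x,y)$ splits over $L'$ exactly when $y$ is a norm from the Artin--Schreier extension, plus Bloch--Gabber--Kato and the trace compatibility) are precisely what the paper's direct argument uses, so the sketch can be completed, but as written it is the thinnest point of your proposal. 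The trade-off: your reduction recycles the computation of (i) and makes the parallel between normic varieties and Severi--Brauer varieties explicit, while the paper's cup-product/corestriction argument is shorter and avoids the cyclic-tower bookkeeping altogether.
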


\begin{proof}[Proof]
Let's prove (i) first. The statements about the cohomological dimension and the separable cohomological dimension can be proved in the same way, so we will only deal with the cohomological dimension. By Proposition \ref{cohdim}, we only need to prove that for any tower of finite field extensions $M/L/K$, the norm $N_{M/L}: K_{q+1}^M(M) \rightarrow K_{q+1}^M(L)$ is surjective. To do so, consider a symbol $\{a_1,...,a_{q+1}\} \in K_{q+1}^M(L)$. Also consider the following variety:
\[Z: N_{M/L}(\mathbf{x}) = a_1.\]
By assumption, one can find finite extensions $L_1,...,L_r$ of $L$ such that:
$$\begin{cases} 
\forall i \in \{1,...,r\}, Z(L_i)\neq \emptyset \\
\{a_2,...,a_{q+1}\} \in \left\langle N_{L_i/L}(K_q^M(L_i)) \mid 1 \leq i \leq r \right\rangle .
 \end{cases} $$
 In other words:
 $$\begin{cases} 
\forall i \in \{1,...,r\}, \; \exists x_i \in (M \otimes_L L_i)^{\times}, \; N_{M \otimes_L L_i/L_i}(x_i) = a_1 \\
\exists (y_1,...,y_r) \in \prod_{i=1}^r K_q^M(L_i), \; \{a_2,...,a_{q+1}\} = \prod_{i=1}^r N_{L_i/L}(y_i).
 \end{cases} $$
 Hence:
 \begin{align*}
N_{M/L}\left( \prod_{i=1}^r N_{M \otimes L_i /M} \left( \{x_i,y_i\} \right) \right) & = \prod_{i=1}^r N_{M \otimes L_i /L}\left( \{x_i,y_i\} \right) \\
& = \prod_{i=1}^r N_{L_i/L}\left(N_{M \otimes L_i /L_i} \left( \{x_i,y_i \}  \right) \right)\\
& = \prod_{i=1}^r N_{L_i/L}\left( \{N_{M \otimes L_i /L_i} (x_i),y_i \} \right)  \\
& = \prod_{i=1}^r N_{L_i/L}\left( \{a_1,y_i \} \right)  \\
& =  \{ a_1,\prod_{i=1}^r N_{L_i/L}(y_i) \}    \\
& =  \{ a_1,a_2,...,a_{q+1} \} 
 \end{align*}
 and $\{ a_1,a_2,...,a_{q+1} \}$ is indeed  in the image of $N_{M/L}: K_{q+1}^M(M) \rightarrow K_{q+1}^M(L)$.\\

Let's now prove (ii). To do so, fix a prime number $\ell$ and first assume that $\ell$ is different from the characteristic of $K$. Consider a finite extension $L$ of $K$ containing a primitive $\ell$-th root of unity and a symbol $\{a_1,...,a_{q+2}\} \in H^{q+2}(L,\mathbb{Z}/\ell\mathbb{Z})$. By assumption and by the Bloch-Kato conjecture, one can find finite extensions $L_1,...,L_r$ of $L$ and elements $b_1\in K_{q}(L_1),...,b_r\in K_{q}(L_r)$ such that $\{a_1,a_2\}|_{L_i}=0$ for each $i$ and $\{a_3,...,a_{q+2}\} = \sum_{i=1}^r \mathrm{Cores}_{L_i/L}(b_i)$. Hence $\{a_1,...,a_{q+2}\} = \sum_{i=1}^r \mathrm{Cores}_{L_i/L}(\{a_1,a_2\} \cup b_i)=0$, and the group $H^{q+2}(L,\mathbb{Z}/\ell\mathbb{Z})$ is trivial. This being true for any finite extension $L$ of $K$ containing a primitive $\ell$-th root of unity, Corollary I.3.3.1 and Proposition I.4.1.21 of \cite{GC} then imply that $K$ has $\ell$-cohomological dimension at most $q+1$.

Let's finally assume that $K$ has characteristic $\ell$. Fix a finite separable extension $L$ of $K$ and an element $x \frac{dy_1}{y_1} \wedge ... \wedge \frac{dy_{q+1}}{y_{q+1}}$ of $\Omega^{q+1}_L$ with $x\in L$ and $y_1,...,y_{q+1} \in L^{\times}$. Consider the cyclic central simple algebra $\alpha \in {_{\ell}}\mathrm{Br}\, L$ corresponding to the form $x \frac{dy_1}{y_1}$ through the isomorphism of Theorem 9.2.4 of \cite{GS}. By assumption, we can find finite extensions $L_1,...,L_r$ of $L$  and elements $z_1\in K_q^M(L_1)$, ..., $z_r\in K_q^M(L_r)$ such that $\alpha|_{L_i}=0$ for each $i$ and $\{y_2,...,y_{q+1}\} =\prod_i^r N_{L_i/L}(z_i)$. Hence, by the Bloch-Gabber-Kato theorem (Theorem 9.5.2 of \cite{GS}), we can find elements $z'_1\in \nu(q)_{L_1},...,z'_r\in \nu(q)_{L_r}$ such that $\frac{dy_2}{y_2} \wedge ... \wedge \frac{dy_{q+1}}{y_{q+1}} = \sum_{i=1}^r \mathrm{tr}_{L_i/L}(z'_i)$. We then have $x \frac{dy_1}{y_1} \wedge ... \wedge \frac{dy_{q+1}}{y_{q+1}}= \sum_{i=1}^r \mathrm{tr}_{L_i/L}(x \frac{dy_1}{y_1} \wedge z'_i) \in \mathrm{Im}(\mathfrak{p}^{q+1}_L)$. This proves that $H^{q+2}_{\ell}(L)=0$, as wished.
\end{proof}

\begin{remarque}\label{rem C1}
Since the varieties considered in Proposition \ref{onedir}(i) are, up to ho\-mo\-ge\-ni\-za\-tion, hypersurfaces of degree $d=[M:L]$ in $\bb P^d_L$, this shows that a $C_1^q$ field has cohomological dimension at most $q+1$. This fact seems to have been overlooked in the literature: for instance, in Theorem 4.2 of \cite{Wit}, the assumption concerning the $C_0^q$ property is unnecessary.
\end{remarque}

\begin{remarque}\label{rem SB}
As stated in the introduction, Proposition \ref{onedir} shows that, in order to check that a field has separable cohomological dimension at most $q+1$, it suffices to check one of the following conditions:
\begin{itemize}
\item[(a)] For each finite extension $L$ of $K$ and each torsor $Z$ under a normic torus over $L$, we have $N_q(Z/L)=K_q^M(L)$.
\item[(b)] For each positive integer $n$, each finite extension $L$ of $K$ and each $\mathrm{PGL}_{n,L}$-torsor $Z$, we have $N_q(Z/L)=K_q^M(L)$.
\end{itemize} 
In particular, a field having the $C_{\Red}^q$ property has separable cohomological dimension at most $q+1$.
\end{remarque}

\section{Proof of the Main Theorem}\label{main}

This section is devoted to the proof of the following theorem, which corresponds to the ``difficult'' direction of our Main Theorem:

\begin{theorem}\label{reduction}
Let $q$ be a non-negative integer. Let $K$ be a any field.  
\begin{itemize}
\item[(i)] If $K$ is perfect and has cohomological dimension at most $q+1$, then for any homogeneous space $Z$ under a smooth linear connected group over $K$, we have $N_q(Z/K)=K_q^M(K)$.
\item[(ii)] If $K$ is imperfect and all its finite extensions have separable cohomological dimension at most $q+1$, we have $N_q(Z/K)=K_q^M(K)$ for any principal homogeneous space $Z$ under a (smooth, connected) reductive group over $K$.
\end{itemize}
\end{theorem}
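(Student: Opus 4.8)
The plan is to follow the three-step strategy announced in the introduction, reducing the general homogeneous space case to a cohomological statement about $H^2$ with finite coefficients, which is in turn reduced to the Bloch–Kato conjecture plus properties of norm varieties.

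\textbf{Step 1: the abelian case in characteristic $0$.} First I would prove: if $L$ has characteristic $0$ and $\mathrm{cd}(L)\le q+1$, $P$ is a finite Galois module over $L$, and $\alpha\in H^2(L,P)$, then $K_q^M(L)$ is spanned by the images of $N_{L'/L}$ for $L'/L$ finite with $\alpha|_{L'}=0$. By a dévissage on $P$ (using that a short exact sequence of finite modules gives a long exact sequence in cohomology, and that trivializing the sub- and quotient-classes over successive extensions trivializes $\alpha$ after composing norms) one reduces to $P=\mu_\ell$. Then $\alpha\in H^2(L,\mu_\ell)={}_\ell\mathrm{Br}(L)$, a Brauer class of exponent $\ell$; choosing a prime decomposition one may assume $\ell$ prime. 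Now the point is to produce, for a given symbol $\{a_1,\dots,a_q\}\in K_q^M(L)$, a finite extension $L'$ splitting $\alpha$ such that $\{a_1,\dots,a_q\}$ lies in the subgroup generated by norms from $L'$ and its finite extensions, modulo $\ell$. Since $\mathrm{cd}_\ell(L)\le q+1$, by Bloch–Kato the class $\{a_1,\dots,a_q\}\cup \alpha'\in H^{q+2}(L,\mu_\ell^{\otimes q+1})$ vanishes, where $\alpha'$ is $\alpha$ viewed via the chosen root of unity; here one should pass to $L(\mu_\ell)$, use that $[L(\mu_\ell):L]$ is prime to $\ell$, and return by a corestriction/norm argument. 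The vanishing of this cup product, together with the fact that $\alpha$ is a \emph{symbol} (over a suitable extension, using Merkurjev–Suslin: any Brauer class of exponent $\ell$ becomes a symbol after a prime-to-$\ell$ extension, or one decomposes into symbols), lets one invoke the existence and properties of Rost–Suslin norm varieties: the norm variety $X_\alpha$ attached to the symbol $\alpha$ has the property that $\{a_1,\dots,a_q\}$ becomes a norm from the function fields of the splitting tower, and crucially $X_\alpha$ has points over extensions splitting $\alpha$. Chasing the multiplicativity of norms in Milnor $K$-theory then gives $\{a_1,\dots,a_q\}\in N_q(\text{split locus}/L) + \ell K_q^M(L)$; iterating removes the $\ell$-divisible error term because $\mathrm{cd}_\ell(L)\le q+1$ forces $K_{q+1}^M$ to be $\ell$-divisible in a controlled way, or rather one runs the argument directly with $\{a_1,\dots,a_{q}\}$ and uses surjectivity of norms modulo $\ell$ at each stage.

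\textbf{Step 2: principal homogeneous spaces.} Given a PHS $Z$ under a reductive (or smooth linear connected) $G$ over $L$, its class lies in $H^1(L,G)$. By a theorem of Steinberg (valid since $L$ is perfect of $\mathrm{cd}\le q+1\le$, in particular $\mathrm{cd}\le 1$ after the relevant reductions, or more precisely using the $\mathrm{cd}\le 2$ version for the maximal torus quotient), one reduces $H^1(L,G)$ to $H^1$ of a maximal torus $T$, hence to $H^1(L,T)$; by a further dévissage on the character lattice of $T$ one reduces to $H^1$ of a finite module, i.e.\ to a class $\beta\in H^1(L,F)$ with $F$ finite, and then to $H^2$ of a finite module via a connecting map from a resolution $1\to F\to P_1\to P_0\to 1$ of tori-with-finite-kernel; this is exactly the input of Step 1. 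Combining, $\{1,\dots,1\}$-type symbols, i.e.\ all of $K_q^M(L)$, are generated by norms from extensions over which $Z$ acquires a point. For positive characteristic perfect fields, Step 2's PHS case (and hence the $C^q_{\Red}$ direction of part (ii) for the finite extensions, which are also perfect) follows by a standard lifting to characteristic $0$: spread $G$ and $Z$ out over a complete DVR with residue field $L$ and generic fibre of characteristic $0$, apply the characteristic $0$ result, and specialize, using that Milnor $K$-theory and norms behave well under such specialization and that the residue field has the same cohomological dimension.

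\textbf{Step 3: general homogeneous spaces over perfect fields.} Finally, for a homogeneous space $Z$ under smooth linear connected $G$ over perfect $K$, one uses Springer's theorem in non-abelian cohomology to reduce to the case of \emph{finite solvable} geometric stabilizers: Springer's result produces, after a base change, a reduction of the structure of $Z$ so that one may assume the stabilizer is finite solvable. Then one performs a dévissage on the solvable stabilizer: a composition series with cyclic quotients lets one fibre $Z$ over homogeneous spaces with smaller stabilizers, and the normic/Milnor-$K$ multiplicativity (as in the computation at the end of Proposition \ref{onedir}(i)) propagates the conclusion $N_q(-/-)=K_q^M(-)$ up the tower, with the base case being a torsor under a finite cyclic (hence the PHS case of Step 2 applied to an extension-of-a-torus resolution, or directly a normic equation).

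\textbf{Main obstacle.} The technical heart — and the step I expect to be hardest — is Step 1: correctly invoking the Rost–Suslin theory of norm varieties to turn the vanishing of a degree-$(q+2)$ cohomology cup product into an actual statement that a given Milnor symbol in degree $q$ is a sum of norms from fields \emph{that split the class $\alpha$}, while cleanly handling the prime-to-$\ell$ descent through $L(\mu_\ell)$ and the reduction from an arbitrary exponent-$\ell$ Brauer class to a symbol. Getting the bookkeeping of these norms and corestrictions to interact correctly with multiplicativity in Milnor $K$-theory, and ensuring the "$\bmod\ \ell$" error terms genuinely disappear under the $\mathrm{cd}_\ell\le q+1$ hypothesis, is where the real work lies; the dévissages in Steps 2 and 3 are, by comparison, formal manipulations in (non-abelian) Galois cohomology combined with the already-established multiplicativity of the norm.
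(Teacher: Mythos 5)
Your characteristic-zero outline follows the paper's route: dévissage of a finite module down to prime-torsion coefficients and a Rost--Suslin norm-variety argument (the paper uses the $\ell$-generic splitting $\nu_{m-1}$-varieties and the exact sequence of Theorem A.1 of Suslin--Joukhovitski, working over the fixed field of an $\ell$-Sylow) for Step 1; Steinberg's theorem plus Ono's lemma, which embeds $H^1(K,T_z)^n$ into $H^2(K,F)$ for a finite $F$, for Step 2; and Springer's nonabelian $H^2$ theorem plus a dévissage of the stabilizer for Step 3. Two corrections of detail there: Steinberg's Theorem 11.1 applies to \emph{quasi-split} groups, so for a general reductive $G={}_aH$ one must first use Proposition \ref{split} to trivialize $[a]$ by norms and only then apply it to $Z$ over the resulting extensions (Theorem \ref{phs}); and in Step 3 the induction runs on the gerb class $\eta\in H^2(K,F)$ along the derived series, applying Corollary \ref{galcoh2} to $\eta^{\mathrm{ab}}$ and producing homogeneous spaces with stabilizer $[F,F]$ mapping \emph{to} $Z$, the base case being torsors (trivial stabilizer), not spaces with cyclic stabilizer.

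The genuine gap is positive characteristic. First, in the perfect case your ``spread out and specialize'' skips the mechanism that makes the specialization work: the field $\tilde K=\mathrm{Frac}\,W(K)$ has cohomological dimension $q+2$ by Kato's theorem, not $q+1$, so the characteristic-zero result only yields $N_{q+1}(\tilde Z/\tilde K)=K^M_{q+1}(\tilde K)$; the paper applies this to the symbol $\{p,\tilde u_1,\dots,\tilde u_q\}$ and descends via the compatibility of norms with residue maps, together with the injectivity of $H^1(\mathcal{O}_{\tilde K_i},\tilde{\mathcal{T}})\to H^1(\tilde K_i,\tilde T)$ to guarantee $Z(k_i)\neq\emptyset$; your phrase ``the residue field has the same cohomological dimension'' suggests applying the result in degree $q$ over $\tilde K$, which is not available. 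Second, and more seriously, part (ii) is not addressed: your parenthetical claim that the finite extensions of an imperfect field ``are also perfect'' is false (for $L/K$ finite one has $[L:L^p]=[K:K^p]$, so imperfection passes to all finite extensions), and the lifting argument breaks down there, because only the \emph{separable} cohomological dimension of $K$ is bounded, $[K:K^p]$ may be arbitrarily large, and hence $\mathrm{cd}(\tilde K)$ need not be at most $q+2$; neither Theorem \ref{phs} nor Corollary \ref{galcoh2} can be invoked over $\tilde K$. This is exactly why the paper proves the relative Theorem \ref{galcoh} (with hypotheses (A) and (B) for a Galois extension $K_\infty$), reduces a reductive $G$ to an isotrivial torus (via the semisimple case, Steinberg over imperfect fields and isotriviality of maximal tori), and applies Theorem \ref{galcoh} with $K_\infty=\tilde K^{nr}$, checking (A) from the separable-cd hypothesis via Kato and Gille and (B) via the explicit description of ${}_p\mathrm{Br}(\tilde K^{nr}/\tilde K)$ in terms of cyclic algebras. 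Without an argument of this kind your Step 2 does not prove part (ii) of the theorem.
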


Note that our Main Theorem immediately follows from Proposition \ref{onedir}, Theorem \ref{reduction} and the fact that cohomological dimension, when it is finite, is preserved under finite separable extensions.

\subsection{First step: trivializing Galois cohomology classes}

In this first step of the proof of theorem \ref{reduction}, we are interested in the following variant of the groups $N_q(Z/K)$:

\begin{definition}
Let $K$ be a characteristic zero field and let $P$ be a finite Galois module over $K$. Fix a cohomology class $\alpha \in H^2(K,P)$. We define $N_q(\alpha/K)$ as the subgroup of $K_q^M(K)$ spanned by the images of the norms coming from finite extensions $L$ of $K$ such that $\alpha|_L=0 \in H^2(L,P)$.
\end{definition}

The main result we are going to prove in this context is the following:

\begin{theorem}\label{galcoh}
Let $q$ be a non-negative integer and let $K$ be a field of characteristic 0. Fix an algebraic closure $\overline{K}$. Assume that we are given a Galois extension $K_{\infty}$ of $K$ in $\overline{K}$ such that:
\begin{itemize}
\item[(A)] for each finite extension $L$ of $K$ and each prime number $\ell$, the morphism $$H^{q+2}(L,\mu_{\ell}^{\otimes (q+1)}) \rightarrow H^{q+2}(LK_{\infty},\mu_{\ell}^{\otimes (q+1)}) $$ is injective;
\item[(B)] for each finite extension $L$ of $K$ and each prime number $\ell$ such that $L$ contains a primitive $\ell$-th root of unity, the group ${_{\ell}}\br(LK_{\infty}/ L)$ is spanned by cyclic central algebras.
\end{itemize}
  Then for any finite Galois module $P$ over $K$ which becomes diagonalisable over $K_{\infty}$ and any cohomology class $$\alpha \in \ker\left( H^2(K,P) \rightarrow H^2(K_{\infty},P)\right),$$ we have $N_q(\alpha/K)=K_q^M(K)$.
\end{theorem}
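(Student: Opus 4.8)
The plan is to reduce the general case to the case of cyclic modules $\mu_\ell$ by dévissage, and then handle the cyclic case using the hypotheses (A), (B) together with the theory of norm varieties. First I would reduce to the case where $P$ is $\ell$-primary for a single prime $\ell$: since $P$ is finite, it splits as a product of its $\ell$-primary components $P = \prod_\ell P[\ell^\infty]$, the cohomology splits accordingly, and $\alpha$ decomposes as a sum of classes $\alpha_\ell \in \ker(H^2(K,P[\ell^\infty]) \to H^2(K_\infty,P[\ell^\infty]))$. If each $\alpha_\ell$ is trivialized by a tower of finite extensions whose norms hit $\{a_2,\dots,a_{q+1}\}$ for a given symbol, then so is $\alpha$ (note $N_q(\alpha/K) \supseteq N_q(\alpha_\ell/K)$ for the relevant extensions — actually one intersects the splitting conditions, using that an extension splitting $\alpha$ must split a fixed finite list of $\alpha_\ell$'s). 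So fix $\ell$ and assume $P$ is killed by a power of $\ell$.

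Next I would reduce to $P$ killed by $\ell$ and in fact to $P = \mu_\ell$. Because $P$ becomes diagonalisable over $K_\infty$, after passing to a finite subextension $K'/K$ inside $K_\infty$ (which we may absorb at the end, since $K'$ is one of the allowed finite extensions and $N_{K'/K}$ composed with the norms from finite extensions of $K'$ still land in $K_q^M(K)$) the module $P_{K'}$ is a successive extension of modules of the form $\mu_{\ell}^{\otimes j}$; but characteristic $0$ and the presence of enough roots of unity over $K'$ let us twist so that we only see copies of $\mu_\ell$ — more carefully, a diagonalisable group killed by $\ell$ over a field containing $\mu_\ell$ is just a product of copies of $\mu_\ell$. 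Using the long exact sequences attached to $0 \to P' \to P \to P'' \to 0$ and induction on $|P|$, one is reduced to: $P = \mu_\ell$, $K$ contains $\mu_\ell$, and $\alpha \in \ker(H^2(K,\mu_\ell) \to H^2(K_\infty,\mu_\ell)) = {}_\ell\br(K_\infty/K)$. Here the subtlety is that to push a trivializing extension of $\alpha''$ up and combine with a trivializing extension of a lift, one needs that over the extension killing $\alpha''$ the class $\alpha$ comes from $H^2$ of the sub-quotient $P'$, which is exactly what the connecting map gives; this dévissage is the kind of bookkeeping done in step (3) of the paper's outline, now in the abelian setting.

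For the base case, hypothesis (B) says ${}_\ell\br(K_\infty/K)$ is spanned by cyclic classes, so write $\alpha = \sum_k \alpha_k$ with each $\alpha_k = (\chi_k, c_k)$ a cyclic algebra split by $K_\infty$, where $\chi_k$ corresponds to a degree-$\ell$ cyclic extension $M_k/K$ and $c_k \in K^\times$; note $\alpha_k$ is split by $M_k$ and by any field over which $c_k$ becomes a norm from $M_k$. Given a symbol $\{a_1,\dots,a_{q+1}\} \in K_q^M(K)$ — wait, rather $\{a_1, \ldots, a_q\}$ together with the target — I would argue as in the proof of Proposition \ref{onedir}: the relevant normic variety $N_{M_k/K}(\mathbf x) = c_k$ needs $N_{q}$ to be everything, and this is where hypothesis (A) enters through the Bloch--Kato conjecture and the Rost--Suslin machinery on norm varieties, exactly as announced in step (1) of the introduction. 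Concretely: (A) says the symbol $\{c_k, a_1, \dots, a_q\} \cdot(\text{twist}) \in H^{q+2}(K, \mu_\ell^{\otimes q+1})$ dies over $K_\infty$, hence (by Bloch--Kato and a norm principle for the norm variety of the symbol $\alpha_k$, à la Suslin) it is a sum of corestrictions from finite extensions splitting $\alpha_k$, and comparing with Milnor $K$-theory via the norm residue isomorphism converts this into the statement that $\{a_1,\dots,a_q\}$ lies in $N_q$ of the variety associated with $\alpha_k$. Iterating over $k$ and multiplying gives $\{a_1,\dots,a_q, t\} \in K_q^M(K)$-worth of conclusions that assemble into $N_q(\alpha/K) = K_q^M(K)$.

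The main obstacle I expect is the base case, specifically extracting from hypothesis (A) and the Bloch--Kato conjecture a usable \emph{norm principle}: one wants to say that whenever a symbol in $H^{q+2}$ is killed by a cyclic algebra $\alpha_k$ and by $K_\infty$, it is already a sum of corestrictions from extensions that split $\alpha_k$. This is not formal — it requires the existence and properties of norm varieties (Rost) and Suslin's results relating their behaviour to divisibility in Milnor $K$-theory, and getting the reduction-of-symbols and the $\ell$-independence/twisting arguments to interlock cleanly (much as in the characteristic-$\ell$ part of Proposition \ref{cohdim}) is the delicate technical heart. The dévissage reductions, by contrast, are routine given the finiteness of $P$ and the characteristic $0$ hypothesis, modulo the careful tracking of which finite extensions are allowed at each stage.
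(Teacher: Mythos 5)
Your overall architecture (d\'evissage to cyclic modules, then a base case powered by hypotheses (A), (B), Bloch--Kato and Rost--Suslin norm varieties) matches the paper's, but two steps as written do not work. First, the reduction where you pass to a finite subextension $K'\subset K_\infty$ diagonalising $P$ and claim it ``may be absorbed at the end'' is a genuine error: proving $N_q(\alpha|_{K'}/K')=K_q^M(K')$ only yields $N_{K'/K}\bigl(K_q^M(K')\bigr)\subseteq N_q(\alpha/K)$, and the norm $N_{K'/K}$ is not surjective in general (if it were for all finite extensions, $K$ would already have cohomological dimension at most $q$ by Proposition \ref{cohdim}, which is not assumed). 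Relatedly, an $\ell$-torsion module that is diagonalisable over $K_\infty$ need not admit a filtration with cyclic quotients over $K$ itself (think of an irreducible two-dimensional $\mathbb{F}_\ell$-representation), so the d\'evissage you invoke is not available over the base field. The paper resolves both points simultaneously with a Sylow argument: taking a finite Galois extension $L/K$ splitting $\alpha$ and the action on $P$, and for each prime $p$ the fixed field $L_p$ of a $p$-Sylow of $\mathrm{Gal}(L/K)$, one gets for $p=\ell$ a unipotent upper-triangular filtration of $P$ over $L_p$ with trivial quotients $\mathbb{Z}/\ell\mathbb{Z}$, hence $[L_p:K]K_q^M(K)\subseteq N_q(\alpha/K)$ via Lemma \ref{easy} and Corollary \ref{sevbr}, while for $p\neq\ell$ restriction--corestriction gives the same containment; the indices $[L_p:K]$ being globally coprime, these subgroups generate all of $K_q^M(K)$. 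Your d\'evissage also glosses over the point encoded in the injectivity hypothesis of Lemma \ref{easy}: the lift of the class to $H^2$ of the submodule must again lie in the kernel of restriction to $K_\infty$ (otherwise the inductive statement, which only concerns such classes, cannot be applied), and checking this is exactly where diagonalisability over $K_\infty$ and the splitting of the filtration over $L_pK_\infty$ are used.

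Second, the base case you yourself flag as ``the delicate technical heart'' is left unproved, and it is the substantive content of the theorem: one needs that if $\{a_1,\ldots,a_{q+2}\}$ vanishes in $K_{q+2}^M(K)/\ell$ then $\{a_3,\ldots,a_{q+2}\}\in N_q(\{a_1,a_2\}/K)$. The paper proves this by induction on $q$: after passing to the fixed field $K_\ell$ of an $\ell$-Sylow subgroup of the absolute Galois group (again with a coprime-degree descent at the end), one takes the largest $m$ with $\{a_1,\ldots,a_m\}|_{K_\ell}\neq 0$, uses the $\ell$-generic splitting variety $X$ for that symbol and the exact sequence of Theorem A.1 of \cite{sj} expressing the kernel of cup-product with $\{a_1,\ldots,a_m\}$ as norms from closed points of $X$, writes $a_{m+1}$ as such a product of norms, and applies the inductive hypothesis over the residue fields of those points. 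You correctly name the ingredients (norm varieties, Bloch--Kato, hypothesis (A) to kill the symbol after untwisting, hypothesis (B) to reduce to cyclic classes, and an iteration over the cyclic summands), so the plan is the right one; but the invalid ``absorb the finite extension'' descent and the missing proof of this norm principle are precisely the places where the actual work of the proof happens.
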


By taking $K_{\infty} = \overline{K}$ and by applying the Merkurjev-Suslin theorem (\cite{Riou}), we obtain the following corollary:

\begin{corollary}\label{galcoh2}
Let $q$ be a non-negative integer and let $K$ be a field of characteristic 0 and of cohomological dimension at most $q+1$. For any finite Galois module $P$ over $K$ and any cohomology class $\alpha \in  H^2(K,P),$ we have $N_q(\alpha/K)=K_q^M(K)$.
\end{corollary}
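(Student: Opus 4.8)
\textbf{Proof proposal for Corollary \ref{galcoh2}.}

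The plan is to deduce this statement directly from Theorem \ref{galcoh} by taking $K_\infty = \overline{K}$. So the only thing to check is that the two hypotheses (A) and (B) of Theorem \ref{galcoh} are automatically satisfied when $K_\infty$ is an algebraic closure of a field $K$ of characteristic $0$ with $\mathrm{cd}(K) \leq q+1$, and that in this situation the hypothesis on $P$ (becoming diagonalisable over $K_\infty$) and the hypothesis on $\alpha$ (being killed by $K_\infty$) hold for \emph{every} finite Galois module $P$ and \emph{every} class $\alpha \in H^2(K,P)$.

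First I would dispose of the hypotheses on $P$ and $\alpha$. Since $K$ has characteristic $0$, every finite Galois module $P$ over $K$ is a finite discrete $\mathrm{Gal}(\overline K/K)$-module, hence becomes trivial — in particular diagonalisable — over $\overline{K} = K_\infty$. Likewise, for any $\alpha \in H^2(K,P)$ we have $H^2(\overline K, P) = 0$ (the absolute Galois group of $\overline K$ is trivial), so $\alpha$ lies in $\ker\bigl(H^2(K,P) \to H^2(K_\infty,P)\bigr)$ trivially. Thus both side-conditions of Theorem \ref{galcoh} are vacuous here, and it remains only to verify (A) and (B).

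For (A): since $K_\infty = \overline K$, for any finite extension $L/K$ we have $LK_\infty = \overline K$, and the group $H^{q+2}(\overline K, \mu_\ell^{\otimes(q+1)})$ vanishes (Galois cohomology of the trivial group). On the other hand, by the Bloch--Kato conjecture (Merkurjev--Voevodsky, cf.~\cite{Riou}), $H^{q+2}(L,\mu_\ell^{\otimes(q+1)}) \cong K_{q+2}^M(L)/\ell$; and since $\mathrm{cd}(K) \leq q+1$, every finite extension $L$ also has $\mathrm{cd}(L) \leq q+1$, so $H^{q+2}(L,\mu_\ell^{\otimes(q+1)}) = 0$ for every prime $\ell$. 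Hence the morphism in (A) is a map between zero groups, and is in particular injective. For (B): again $LK_\infty = \overline K$, so $_\ell\br(LK_\infty/L) = {}_\ell\br(L)$, which is the full $\ell$-torsion of the Brauer group of $L$; by the Merkurjev--Suslin theorem (\cite{Riou}), when $L$ contains a primitive $\ell$-th root of unity this group is generated by (classes of) cyclic central simple algebras. So (B) holds. With (A), (B), and the conditions on $P$ and $\alpha$ all verified, Theorem \ref{galcoh} gives $N_q(\alpha/K) = K_q^M(K)$, which is the claim.

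The argument is short; the only point requiring any care is the appeal to Bloch--Kato to translate the cohomological-dimension hypothesis $\mathrm{cd}(K)\leq q+1$ into the vanishing $H^{q+2}(L,\mu_\ell^{\otimes(q+1)})=0$ needed for (A) — but this is exactly the standard characterization of cohomological dimension in terms of Galois cohomology with $\mu_\ell^{\otimes n}$ coefficients (together with the stability of $\mathrm{cd}$ under finite extensions), so there is no real obstacle. I expect no genuine difficulty in this deduction; all the substance lives in Theorem \ref{galcoh} itself.
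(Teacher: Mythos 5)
Your proposal is correct and is exactly the paper's argument: the authors obtain the corollary precisely by taking $K_{\infty}=\overline{K}$ in Theorem \ref{galcoh}, where (A) holds because $H^{q+2}(L,\mu_{\ell}^{\otimes(q+1)})=0$ for every finite extension $L$ (as $\mathrm{cd}(L)\leq q+1$) and (B) is the Merkurjev--Suslin theorem, the conditions on $P$ and $\alpha$ being vacuous over an algebraically closed field of characteristic $0$. (Your appeal to Bloch--Kato in step (A) is harmless but unnecessary: the vanishing of $H^{q+2}(L,\mu_{\ell}^{\otimes(q+1)})$ follows directly from the definition of cohomological dimension.)
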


\begin{remarque}
In order to prove Theorem \ref{reduction} for perfect fields, we will only need the previous corollary. But to deal with imperfect fields, we will need to consider other choices for $K_{\infty}$ and hence we will have to use the more general Theorem \ref{galcoh}.
\end{remarque}

\subsubsection{The Galois module $\mathbb{Z}/\ell\mathbb{Z}$} \label{firststep1}

\begin{proposition}
Let $q$ be a non-negative integer, let $\ell$ be a prime number and let $K$ be a field of characteristic 0. Consider $q+2$ elements $a_1,...,a_{q+2}$ in $K^{\times}$ such that the symbol $\{a_1,...,a_{q+2}\}$ is trivial in $K_{q+2}^M(K)/\ell$. Set $\alpha := \{a_1,a_2\} \in H^2(K,\mu_{\ell}^{\otimes 2})$. Then $\{a_3,...,a_{q+2}\} \in N_q(\alpha/K)$.
\end{proposition}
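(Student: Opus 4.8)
The plan is to exploit the hypothesis that $\{a_1,\dots,a_{q+2}\}$ vanishes in $K^M_{q+2}(K)/\ell$ together with the structure theory of norm varieties attached to the symbol $\alpha=\{a_1,a_2\}$. First I would dispose of the degenerate cases: if $\alpha=0$ in $H^2(K,\mu_\ell^{\otimes 2})$, then $K$ itself is an admissible field in the definition of $N_q(\alpha/K)$, so $N_q(\alpha/K)=K^M_q(K)$ and there is nothing to prove; similarly if the symbol $\{a_1,a_2\}$ is already $\ell$-divisible we are done. Otherwise $\alpha$ is a nonzero class killed by $\ell$, and by the Merkurjev--Suslin theorem it corresponds to a cyclic (hence symbol) class of order $\ell$ in $\br K$.

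The heart of the argument is to produce a \emph{splitting variety with good norm properties}. Associated to the nonzero symbol $\{a_1,\dots,a_{q+2}\}\in K^M_{q+2}(K)/\ell$ there is, by the work of Rost and Suslin on norm varieties (as invoked in step (1) of the introduction), a norm variety $X$ of dimension $\ell^{q+1}-1$ over $K$; the key facts I would use are (a) $X$ has a zero-cycle of degree prime to $\ell$ over any field over which the symbol $\{a_1,\dots,a_{q+2}\}$ dies, and in particular (since the symbol is already trivial mod $\ell$ over $K$) $X$ itself has a closed point of degree prime to $\ell$ over $K$; and (b) over any extension splitting $\alpha=\{a_1,a_2\}$, the ``outer'' symbol $\{a_3,\dots,a_{q+2}\}$ becomes controllable. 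The cleanest route is to realize $X$ (or a suitable birational model) as a sequence of projective bundles / Severi--Brauer-type fibrations built from $a_1,a_2$ on the one hand and the remaining $a_i$ on the other, so that the function field $K(X)$ both splits $\alpha$ and carries $\{a_3,\dots,a_{q+2}\}$ as a norm from a small piece. Concretely I would argue: over $K(X)$ the class $\alpha$ vanishes, and the specialization/norm argument (a transfer argument using the degree-prime-to-$\ell$ point, combined with a Bloch--Kato--type reciprocity computing $\{a_3,\dots,a_{q+2}\}$ in terms of divisors on $X$) shows that $\{a_3,\dots,a_{q+2}\}$ lies in the subgroup of $K^M_q(K)$ generated by norms from fields splitting $\alpha$. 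The prime-to-$\ell$ part of the degree is then absorbed because a multiple of $\{a_3,\dots,a_{q+2}\}$ by an integer prime to $\ell$ already equals a norm, and—since we may run the same argument after adjoining an $\ell$-th root making the relevant symbols split—one bootstraps from ``mod $\ell$'' to ``on the nose''. This last bootstrapping I would organize as an induction on the $\ell$-adic valuation of the order of a lift, using that each step replaces the problem with the analogous problem over a field where the obstruction has been divided by $\ell$.

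I would then reduce the general case to this prime-power case: writing $\{a_1,\dots,a_{q+2}\}=0$ in $K^M_{q+2}(K)/\ell$ for a single prime $\ell$ is exactly the hypothesis, so there is no need to combine primes here—the statement is already ``$\ell$-local'', which is why the conclusion is the full equality $\{a_3,\dots,a_{q+2}\}\in N_q(\alpha/K)$ rather than something mod $\ell$. It remains only to check that the norm-variety input is available in characteristic $0$ without extra hypotheses, which it is by Rost--Voevodsky.

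The step I expect to be the main obstacle is (b) above: extracting, from the geometry of the norm variety of $\{a_1,\dots,a_{q+2}\}$, the precise statement that $\{a_3,\dots,a_{q+2}\}$ is a sum of norms from extensions killing the \emph{degree-two} class $\{a_1,a_2\}$ specifically (and not merely from extensions killing the whole degree-$(q+2)$ symbol). This requires a compatible choice of fibration exhibiting $K(X)$ as built by first splitting $\alpha$ (a Severi--Brauer factor for $a_1,a_2$) and then handling the remaining symbol over that base, together with a careful norm/transfer bookkeeping to push everything back down to $K$ while keeping track of which intermediate fields split $\alpha$. Managing the prime-to-$\ell$ degree of the norm-variety point cleanly—so that it genuinely disappears rather than surviving as a coefficient—is the other delicate point, and I would handle it by the standard trick of iterating the construction over $K(\sqrt[\ell]{\cdot})$-type extensions and using that $N_q(\alpha/K)$ is stable under such manipulations.
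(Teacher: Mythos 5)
There is a genuine gap, and it sits exactly where you flag ``the main obstacle''. First, your central object is ill-posed: you attach a norm variety to ``the nonzero symbol $\{a_1,\dots,a_{q+2}\}\in K^M_{q+2}(K)/\ell$'', but the hypothesis of the proposition is precisely that this symbol is \emph{trivial} mod $\ell$, so the Rost--Suslin machinery (existence of an $\ell$-generic splitting $\nu$-variety, and the norm-theoretic consequences) does not apply to it; your item (a) then rests on a contradiction in the setup. The paper's proof avoids this by first passing to the field $K_\ell$ fixed by an $\ell$-Sylow subgroup of the absolute Galois group and taking the \emph{largest} $m$ with $\{a_1,\dots,a_m\}|_{K_\ell}\neq 0$; the norm variety is attached to this nonvanishing truncation, not to the full symbol. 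Second, your step (b) --- producing $\{a_3,\dots,a_{q+2}\}$ as a product of norms from extensions killing the degree-two class $\alpha=\{a_1,a_2\}$ specifically --- is the whole content of the proposition, and the route you sketch (realizing the norm variety as an iterated Severi--Brauer-type fibration with a factor splitting $\alpha$) is not available: for $\ell>2$ and long symbols, norm varieties are built by Rost's symmetric-power constructions and carry no such fibration in general, and no ``reciprocity on divisors'' is formulated that would yield the claim.

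The mechanism that actually makes the argument work, and which is absent from your proposal, is the exact sequence of Suslin--Joukhovitski (Theorem A.1 of \cite{sj}) over $K_\ell$: the kernel of cup-product with $\{a_1,\dots,a_m\}$ on $K_\ell^{\times}$ is generated by norms from residue fields of closed points of the norm variety of that truncated symbol. By maximality of $m$ this exhibits $a_{m+1}$ as a product of norms $N_{K_\ell(x_i)/K_\ell}(b_i)$, and over each $K_\ell(x_i)$ the truncated symbol dies, so an induction on the length of the symbol (i.e.\ on $q$) applies there and, via the projection formula, reassembles $\{a_3,\dots,a_{q+3}\}$ as a product of norms from extensions trivializing $\alpha$; a final restriction--corestriction argument descends from $K_\ell$ to $K$ using that the relevant degree is prime to $\ell$. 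Your proposal contains neither the induction on the symbol length nor the passage to $K_\ell$ (needed to invoke that exact sequence), and your ``bootstrapping on the $\ell$-adic valuation'' addresses a non-issue: the class $\alpha$ is $\ell$-torsion and the paper's computation produces the symbol on the nose, the only divisibility input being that $N_q(\alpha/K)$ contains $\ell K_q^M(K)$ (via $K(\sqrt[\ell]{a_1})$), which absorbs the prime-to-$\ell$ degrees. As it stands, the proposal correctly names the toolbox but does not contain a proof of the key step.
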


\begin{proof}[Proof]
We proceed by induction on $q$, noting that if $q=0$, there is nothing to prove. Assume then that the proposition is true for some integer $q\geq 0$ and consider $q+3$ elements $a_1,...,a_{q+3}$ in $K^{\times}$ such that the symbol $\{a_1,...,a_{q+3}\}$ is trivial in $K_{q+3}^M(K)/\ell$. Observe that, if $\alpha=0$, there is nothing to prove. Hence we may and do assume that $\alpha\neq 0$.

Let $K_{\ell}$ be the field fixed by an $\ell$-Sylow subgroup of $\mathrm{Gal}(\overline{K}/K)$. Since $\alpha\neq 0$, a restriction-corestriction argument shows that $\alpha|_{K_{\ell}}\neq 0$. Let $m\in \{2,...,q+2\}$ be the largest integer such that $\{a_1,...,a_m\}|_{K_{\ell}} \neq 0 \in H^m(K_{\ell},\mu_{\ell}^{\otimes m})$. According to Theorem 1.21 of \cite{sj}, there exists a geometrically irreducible projective $\ell$-generic splitting $\nu_{m-1}$-variety $X$ for $\{a_1,...,a_m\}|_{K_{\ell}}$ (see Definitions 1.10 and 1.20 of \cite{sj}). Moreover, by Theorem A.1 of \cite{sj}, we have an exact sequence:
$$\bigoplus_{x\in X\text{ closed}} K_{\ell}(x)^{\times} \xrightarrow[]{\bigoplus N_{K_{\ell}(x)/K_{\ell}}} K_{\ell}^{\times} \xrightarrow[]{\{a_1,...,a_m\}\cup} K_{m+1}^M(K_{\ell})/\ell.$$
Since $\{a_1,...,a_{m+1}\}|_{K_{\ell}} = 0 \in H^{m+1}(K_{\ell},\mu_{\ell}^{\otimes m+1})$, we deduce that there are $r$ closed points $x_1,...,x_r$ in $X$ and an element $b_i\in K_{\ell}(x_i)^{\times}$ for each $i\in \{1,...,r\}$ such that $a_{m+1} = \prod_{i=1}^r N_{K_{\ell}(x_i)/K_{\ell}}(b_i)$.

Now, for each $i$, the symbol $\{a_1,...,a_{m}\}|_{K_{\ell}(x_i)}$ is trivial in $H^{m}(K_{\ell}(x_i),\mu_{\ell}^{\otimes m})$. By the inductive assumption, we deduce that $\{a_3,...,a_{m}\} \in N_{m-2}(\alpha|_{K_{\ell}(x_i)}/K_{\ell}(x_i))$ for each $i$. In other words, we can find some finite extensions $K_{i,1},...,K_{i,r_i}$ of $K_{\ell}(x_i)$ and an $r_i$-tuple $(c_{i,1},...,c_{i,r_i}) \in \prod_{j=1}^{r_i} K_{m-2}^M(K_{i,j})$ such that:
$$\begin{cases} 
\alpha|_{K_{i,j}}=0\,\text{ for each }\,j\in \{1,...,r_i\}, \\
\{a_3,...,a_{m}\} = \prod_{j=1}^{r_i} N_{K_{i,j}/K_\ell(x_i)}(c_{i,j}).
 \end{cases} $$ 
 We then compute:
 \begin{align*}
 \prod_{i=1}^{r} \prod_{j=1}^{r_i} N_{K_{i,j}/K_{\ell}}(&\{c_{i,j},b_i,a_{m+2},...,a_{q+3}\}) \\ &=  \prod_{i=1}^{r} \prod_{j=1}^{r_i} N_{K_{\ell}(x_i)/K_{\ell}}\left( N_{K_{i,j}/K_\ell(x_i)}(\{c_{i,j},b_i,a_{m+2},...,a_{q+3}\})\right)\\
 &=  \prod_{i=1}^{r} N_{K_{\ell}(x_i)/K_{\ell}}\left( \{a_3,...,a_m,b_i,a_{m+2},...,a_{q+3}\}\right) \\
 &=  \{a_3,...,a_{q+3}\}.
 \end{align*}
 Hence $\{a_3,...,a_{q+3}\}|_{K_{\ell}} \in N_{q+1}(\alpha|_{K_{\ell}}/{K_{\ell}})$. This means that there exists a finite extension $K'$ of $K$ contained in $K_{\ell}$ such that $\{a_3,...,a_{q+3}\}|_{K'} \in N_{q+1}(\alpha|_{K'}/{K'})$. Since $\ell$ does not divide the degree of the extension $K'/K$, we deduce that $\{a_3,...,a_{q+3}\} \in N_{q+1}(\alpha/K)$.
\end{proof}

As a corollary, we obtain the following particular case of Theorem \ref{galcoh}:

\begin{corollary}\label{sevbr}
Let $q$ be a non-negative integer and let $\ell$ be a prime number. Let $K$ be a field of characteristic 0. Fix an algebraic closure $\overline{K}$. Assume that we are given a Galois extension $K_{\infty}$ of $K$ in $\overline{K}$ satisfying assumptions (A) and (B) of Theorem \ref{galcoh}. Then for any $\alpha \in \mathrm{Ker}\left( H^2(K,\mathbb{Z}/\ell\mathbb{Z}) \rightarrow H^2(K_{\infty},\mathbb{Z}/\ell\mathbb{Z})   \right)$, we have $N_q(\alpha/K)=K_q^M(K)$.
\end{corollary}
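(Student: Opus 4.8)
The plan is to reduce from an arbitrary class $\alpha$ in $\ker(H^2(K,\Z/\ell\Z)\to H^2(K_\infty,\Z/\ell\Z))$ to the situation covered by the previous proposition, namely the case where $\alpha=\{a_1,a_2\}$ is a symbol that moreover fits into a symbol $\{a_1,\dots,a_{q+2}\}$ that is trivial modulo $\ell$. First I would dispose of the case $\alpha=0$, which is trivial, so assume $\alpha\neq 0$. The subtlety is that $H^2(K,\Z/\ell\Z)$ is a priori only cyclic algebra data when $K$ contains a primitive $\ell$-th root of unity, and the class need not be split by $K_\infty$ in an obvious symbolic way; hypotheses (A) and (B) are precisely what will let us navigate this.

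The key steps, in order: (1) Pass to $K(\zeta_\ell)$ by a restriction--corestriction argument: since $[K(\zeta_\ell):K]$ divides $\ell-1$ and is prime to $\ell$, it suffices to prove $\{x\}\in N_q(\alpha/K)$ for each $x$ in $K^\times$ after base change to $L_0:=K(\zeta_\ell)$, provided one is careful that the extension $K_\infty$ restricted to $L_0$ still satisfies (A) and (B), which it does since those hypotheses are stated for all finite extensions of $K$. So without loss of generality assume $\zeta_\ell\in K$. (2) Now $H^2(K,\Z/\ell\Z)\cong H^2(K,\mu_\ell)={}_\ell\br(K)$, and the condition $\alpha|_{K_\infty}=0$ says $\alpha\in{}_\ell\br(K_\infty/K)$. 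By hypothesis (B), this group is spanned by cyclic algebras, so we may write $\alpha=\sum_{j} \{u_j,v_j\}$ with each $\{u_j,v_j\}$ split by $K_\infty$. (3) For a fixed generator $\{x\}$ of $K_q^M(K)$ (i.e.\ working symbol by symbol; it suffices to treat symbols $\{x_1,\dots,x_q\}$), I want to show $\{x_1,\dots,x_q\}\in N_q(\alpha/K)$. The idea is to produce an auxiliary element so that Corollary \ref{galcoh2}'s mechanism — really the Proposition above it — applies. Since $\{x_1,\dots,x_q\}$ is an arbitrary symbol, consider the element $\beta:=\alpha\cup\{x_1,\dots,x_q\}\in H^{q+2}(K,\mu_\ell^{\otimes(q+1)})$. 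By hypothesis (A), $\beta$ injects into $H^{q+2}(K_\infty,\mu_\ell^{\otimes(q+1)})$, and since $\alpha$ dies over $K_\infty$, so does $\beta$; hence $\beta=0$, i.e.\ $\alpha\cup\{x_1,\dots,x_q\}=0$ in $H^{q+2}(K,\mu_\ell^{\otimes(q+1)})$, equivalently (Bloch--Kato) the symbol $\{u,v,x_1,\dots,x_q\}$ is trivial mod $\ell$ for each cyclic piece $\alpha=\sum\{u_j,v_j\}$ — or after absorbing the sum, it is enough to run the argument one cyclic summand at a time and add the resulting normic relations. (4) Apply the preceding Proposition to the trivial-mod-$\ell$ symbol $\{u_j,v_j,x_1,\dots,x_q\}$ (with $q+2$ slots: here the ``$q$'' of that proposition is our $q$, the first two entries are $u_j,v_j$, and $\alpha_j:=\{u_j,v_j\}$): it yields $\{x_1,\dots,x_q\}\in N_q(\alpha_j/K)$, so in particular $\{x_1,\dots,x_q\}$ is a sum of norms from finite extensions splitting $\alpha_j$. (5) Combine over $j$: a finite extension splitting all the $\alpha_j$ simultaneously splits $\alpha$, so by taking compositums (and using the projection formula / multiplicativity of norms) one deduces $\{x_1,\dots,x_q\}\in N_q(\alpha/K)$. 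Since the symbols span $K_q^M(K)$, we get $N_q(\alpha/K)=K_q^M(K)$.

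I expect the main obstacle to be step (3)--(5): controlling how the decomposition $\alpha=\sum_j\{u_j,v_j\}$ interacts with the normic relations. Each summand gives norms from its own family of splitting fields $\{K_{j,k}\}$, and to land inside $N_q(\alpha/K)$ one needs finite extensions that split $\alpha$ itself, not merely the individual $\{u_j,v_j\}$. The clean way around this is to first replace $\alpha$ by a \emph{single} cyclic algebra: in the presence of $\zeta_\ell$, a theorem of Merkurjev--Suslin (index-exponent in the $\ell$-torsion, or simply the fact that over a field containing $\zeta_\ell$ any element of ${}_\ell\br$ split by a given field is Brauer-equivalent to a cyclic algebra when... ) — actually, more robustly, one reduces to $K$ being an $\ell$-special field (passing to the fixed field $K_\ell$ of an $\ell$-Sylow, exactly as in the Proposition's proof), over which ${}_\ell\br(K_\ell)$ \emph{is} generated by a single cyclic class once $\alpha$ is a single symbol, and then transfers back along the prime-to-$\ell$ extension $K_\ell/K$. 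With $\alpha$ a single cyclic class $\{u,v\}$, steps (3)--(5) collapse to a direct application of the Proposition followed by the $K_\ell/K$ descent, and the bookkeeping disappears. The other place requiring care is verifying that hypotheses (A) and (B) are inherited by $L_0=K(\zeta_\ell)$ and by $K_\ell$ and their finite extensions — but this is immediate from the fact that those hypotheses quantify over \emph{all} finite extensions of $K$, together with the observation that $LK_\infty$ for $L$ finite over $K(\zeta_\ell)$ or $K_\ell$ is again of the required form.
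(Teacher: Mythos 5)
Your overall strategy for the case $\zeta_\ell\in K$ (use (B) to write $\alpha$ as a sum of cyclic classes split by $K_\infty$, use (A) plus Bloch--Kato to make the extended symbol trivial mod $\ell$, then invoke the preceding Proposition) is the paper's, but two of your reduction steps have genuine gaps. First, the ``WLOG $\zeta_\ell\in K$'' in step (1) is not justified: restriction--corestriction from $L_0=K(\zeta_\ell)$ only shows that $N_q(\alpha/K)$ contains $N_{L_0/K}\bigl(K_q^M(L_0)\bigr)\supseteq [L_0:K]\,K_q^M(K)$, i.e.\ that the quotient $K_q^M(K)/N_q(\alpha/K)$ is killed by an integer prime to $\ell$. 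Since the statement to be proved is an integral equality, this is not enough. The paper does not make this reduction: it proves the root-of-unity case, and for general $K$ it combines the containment $[K(\zeta_\ell):K]K_q^M(K)\subseteq N_q(\alpha/K)$ with the following: for each prime $p\neq\ell$, choosing a finite Galois $L/K$ with $\alpha|_L=0$ and letting $L_p$ be the fixed field of a $p$-Sylow subgroup of $\gal(L/K)$, one has $\alpha|_{L_p}=0$ (restriction--corestriction, using that $\alpha$ is $\ell$-torsion and $[L:L_p]$ is a $p$-power), hence $[L_p:K]K_q^M(K)\subseteq N_q(\alpha/K)$ with $[L_p:K]$ prime to $p$. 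These indices have gcd $1$, so the quotient vanishes. You need this (or an equivalent) argument; the same objection applies to your final ``transfer back along $K_\ell/K$'', which again only yields prime-to-$\ell$ index control.

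Second, the combination over the cyclic summands (your steps (3)--(5)), which you rightly flag as the main obstacle, is not repaired by your proposed fix. Replacing $\alpha$ by a single cyclic class is not available: an element of ${_{\ell}}\br(K_\infty/K)$, even over an $\ell$-special field, is in general a \emph{sum} of symbols and can have index $\ell^n$ with $n\geq 2$, in which case it is not a single symbol; your sentence ``over which ${_{\ell}}\br(K_\ell)$ is generated by a single cyclic class once $\alpha$ is a single symbol'' is circular. The compositum remark also does not help, because the norm relations produced for $\alpha_1$ come from fields that need not split the other summands. What does work (and is presumably what the paper's ``follows immediately'' hides) is an induction on the number of summands, in the spirit of Lemma \ref{easy}: each generator $\{u_j,v_j\}$ provided by (B) individually dies over $K_\infty$, so (A) and Bloch--Kato give the one-summand case via the Proposition; then write $x$ as a product of norms from finite extensions $K_i$ killing $\alpha_1$, note that over $K_i$ the class $\alpha$ restricts to a sum of one fewer cyclic classes split by $K_iK_\infty$ and that (A), (B) persist because finite extensions of $K_i$ are finite extensions of $K$, and iterate; the fields obtained at the last stage kill all summands, hence $\alpha$, and the norms compose. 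Two smaller points: $\beta=0$ is not ``equivalent'' to the vanishing of each $\{u_j,v_j,x_1,\dots,x_q\}$ (you get the latter per summand directly from (A)), and $K_\ell$ is an infinite extension of $K$, so inheriting (A)/(B) there requires a limit argument rather than being immediate from hypotheses stated for finite extensions.
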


\begin{proof}[Proof]
If $K$ contains a primitive $\ell$-th root of unity, this follows immediately from the previous proposition, the fact that $\alpha|_{K_{\infty}}=0$ and assumptions (A) and (B) of Theorem \ref{galcoh}. We may hence assume that $K$ does not contain any primitive $\ell$-th root of unity. In that case, let $\zeta_{\ell}$ be a primitive $\ell$-th root of unity in $\overline{K}$. Then we know that $N_q(\alpha|_{K(\zeta_{\ell})}/K(\zeta_{\ell}))=K_q^M(K(\zeta_{\ell}))$, so that $N_q(\alpha/K)$ contains $[K(\zeta_{\ell}):K]K_q^M(K)$. Moreover, for each prime number $p$ different from $\ell$, if $L$ is a finite Galois extension of $K$ such that $\alpha|_L=0$ and if $L_p$ is a field fixed by a $p$-Sylow subgroup of $\mathrm{Gal}(L/K)$, then a restriction-corestriction argument shows that $\alpha|_{L_p}=0$, so that $N_q(\alpha/K)$ contains $[L_p:K]K_q^M(K)$. We deduce that $N_q(\alpha/K)=K_q^M(K)$.
\end{proof}

\subsubsection{Behaviour with respect to ``d\'evissages''}

In order to reduce Theorem \ref{galcoh} to the case studied in the previous paragraph, we will need to carry out some ``d\'evissages''. The following easy lemma will be very useful for that purpose:

\begin{lemma}\label{easy}
Let $q$ be a non-negative integer. Let $K$ be a field of characteristic 0, fix an algebraic closure $\overline{K}$ and consider an exact sequence of finite Galois modules:
$$0 \rightarrow P \rightarrow Q \rightarrow R \rightarrow 0.$$
Let $K_{\infty}$ be an algebraic extension of $K$ in $\overline{K}$ such that, for each finite extension $L$ of $K$, the morphism $H^2(LK_{\infty},P) \rightarrow H^2(LK_{\infty},Q)$ is injective. Make the following assumptions:
\begin{itemize}
\item[(i)] for any $\beta \in \mathrm{Ker}\left( H^2(K,R) \rightarrow H^2(K_{\infty},R)\right)$, we have $N_q(\beta/K) = K_q^M(K)$;
\item[(ii)] for any finite extension $L$ of $K$ and for any $\gamma \in \mathrm{Ker}\left( H^2(L,P) \rightarrow H^2(LK_{\infty},P)\right)$, we have $N_q(\gamma/L) = K_q^M(L)$.
\end{itemize}
Then, for any $\alpha\in \mathrm{Ker}\left( H^2(K,Q) \rightarrow H^2(K_{\infty},Q)\right)$, we have $N_q(\alpha/K) = K_q^M(K)$.
\end{lemma}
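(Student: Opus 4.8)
The plan is to do a standard dévissage in non-abelian (here abelian) cohomology, exploiting that $N_q(\cdot/K)$ is ``additive'' along exact sequences once we know it is well-behaved over all finite extensions. So fix $\alpha \in \ker\left(H^2(K,Q) \to H^2(K_\infty,Q)\right)$ and let $\beta \in H^2(K,R)$ be its image under the map induced by $Q \to R$. Since $\alpha|_{K_\infty} = 0$, we have $\beta|_{K_\infty} = 0$, so by assumption (i) we get $N_q(\beta/K) = K_q^M(K)$. This means there are finite extensions $L_1,\dots,L_r$ of $K$, each with $\beta|_{L_i} = 0 \in H^2(L_i,R)$, and elements $y_i \in K_q^M(L_i)$ with $\prod_i N_{L_i/K}(y_i) = \{$ any prescribed symbol $\}$; more precisely $N_q(\beta/K)$ being all of $K_q^M(K)$ lets us write $1 \in K_0^M(K) = \mathbb Z$, and hence every element of $K_q^M(K)$, as such a combination. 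The point of killing $\beta$ over $L_i$ is that then $\alpha|_{L_i}$ comes from $H^2(L_i,P)$: by the long exact sequence attached to $0 \to P \to Q \to R \to 0$, since the image of $\alpha|_{L_i}$ in $H^2(L_i,R)$ is $\beta|_{L_i} = 0$, there is a class $\gamma_i \in H^2(L_i,P)$ mapping to $\alpha|_{L_i}$.

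Next I would check that $\gamma_i$ lies in the kernel of $H^2(L_i,P) \to H^2(L_iK_\infty,P)$, so that hypothesis (ii) applies to it. Over $L_iK_\infty$ the class $\gamma_i$ maps to $\alpha|_{L_iK_\infty} = 0$ in $H^2(L_iK_\infty,Q)$; by the injectivity hypothesis on $H^2(L_iK_\infty,P) \to H^2(L_iK_\infty,Q)$ this forces $\gamma_i|_{L_iK_\infty} = 0$. Hence (ii) gives $N_q(\gamma_i/L_i) = K_q^M(L_i)$ for each $i$. In particular the chosen element $y_i \in K_q^M(L_i)$ can be rewritten as $y_i = \prod_j N_{L_{i,j}/L_i}(z_{i,j})$ for suitable finite extensions $L_{i,j}/L_i$ with $\gamma_i|_{L_{i,j}} = 0$ and $z_{i,j} \in K_q^M(L_{i,j})$. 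Over $L_{i,j}$ both $\beta$ (since $L_{i,j}/L_i$ and $\beta|_{L_i}=0$) and $\gamma_i$ vanish, so $\alpha|_{L_{i,j}}$, which is the image of $\gamma_i|_{L_{i,j}}$, is zero in $H^2(L_{i,j},Q)$. Thus each $L_{i,j}$ is a field over which $\alpha$ trivializes, and by transitivity of the norm, $N_{L_{i,j}/K}(z_{i,j}) = N_{L_i/K}(N_{L_{i,j}/L_i}(z_{i,j}))$ contributes to $N_q(\alpha/K)$; assembling, $\prod_{i,j} N_{L_{i,j}/K}(z_{i,j}) = \prod_i N_{L_i/K}(y_i)$, which we arranged to be an arbitrary element of $K_q^M(K)$. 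Therefore $N_q(\alpha/K) = K_q^M(K)$.

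A cleaner way to package the two-stage argument, which I would actually write, is to prove a small auxiliary fact first: if $M/L/K$ is a tower of finite extensions, $z \in K_q^M(M)$, and $\alpha$ is a class over $K$ with $\alpha|_M = 0$, then $N_{M/K}(z) \in N_q(\alpha/K)$ (immediate from the definition and functoriality of norms). Then the whole proof reduces to: (a) use (i) to express a generator-set of $K_q^M(K)$ via norms from fields $L_i$ killing $\beta$; (b) over each such $L_i$, lift $\alpha|_{L_i}$ to $\gamma_i \in H^2(L_i,P)$ via exactness; (c) verify $\gamma_i$ dies over $L_iK_\infty$ using the injectivity hypothesis; (d) apply (ii) over $L_i$ to the specific coefficients appearing in (a), obtaining norms from $L_{i,j}$ killing $\gamma_i$ (hence $\alpha$); (e) compose norms. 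The only genuinely delicate point is the bookkeeping in steps (a) and (d): one must be careful that the finitely many extensions and the finitely many $K_q^M$-coefficients produced by (i) are exactly the data to which (ii) is subsequently applied, and that the final product of compound norms reconstitutes the target symbol; but this is precisely the same manipulation already carried out in the proof of Proposition \ref{onedir}(i), so it is routine. No deep input is needed here beyond the long exact cohomology sequence and the hypotheses; the lemma is genuinely an ``easy'' packaging lemma.
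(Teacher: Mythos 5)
Your proposal is correct and follows essentially the same route as the paper's proof: map $\alpha$ to $\beta\in H^2(K,R)$, use (i) to write the target element as a product of norms from extensions killing $\beta$, lift $\alpha$ over each such extension to a class in $H^2(\cdot,P)$ which dies over the compositum with $K_\infty$ by the injectivity hypothesis, apply (ii) to the coefficients, and compose norms. The only blemish is the superfluous aside about writing $1\in K_0^M(K)$ — the hypothesis $N_q(\beta/K)=K_q^M(K)$ already gives the required expression directly — but this does not affect the argument.
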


\begin{proof}
Let $\alpha$ be an element of $ \mathrm{Ker}\left( H^2(K,Q) \rightarrow H^2(K_{\infty},Q)\right)$ and take any $x \in K_q^M(K)$. Let $\beta$ be the image of $\alpha$ in $H^2(K,R)$. By assumption (i), one can then find some finite extensions $K_1,...,K_r$ of $K$ and an $r$-tuple $(x_1,...,x_r) \in \prod_{i=1}^r K_q^M(K_i)$ such that:
\[\begin{cases} 
\beta|_{K_i}=0\,\text{ for each }\,i\in \{1,...,r\}, \\
x = \prod_{i=1}^r N_{K_i/K}(x_i).
\end{cases}\]
One then observes that, for each $i$, one can lift the class $\alpha|_{K_i} \in H^2(K_i,Q)$ to a class $\gamma_i \in H^2(K_i,P)$. Since  $H^2(K_iK_{\infty},P) \rightarrow H^2(K_iK_{\infty},Q)$ is injective, $\gamma_i$ lies in fact in $\mathrm{Ker}\left( H^2(K_i,P) \rightarrow H^2(K_iK_{\infty},P)\right)$.  By assumption (ii), one can then find some finite extensions $K_{i,1},...,K_{i,r_i}$ of $K_i$ and an $r_i$-tuple $(x_{i,1},...,x_{i,r_i}) \in \prod_{j=1}^{r_i} K_q^M(K_{i,j})$ such that:
\[\begin{cases} 
\gamma_i|_{K_{i,j}}=0\,\text{ for each }\, j\in \{1,...,r_i\}, \\
x_i = \prod_{j=1}^{r_i} N_{K_{i,j}/K_i}(x_{i,j}).
\end{cases} \]
 Hence $x = \prod_{i=1}^{r} \prod_{j=1}^{r_i} N_{K_{i,j}/K}(x_{i,j})$, and $x \in N_q(\alpha/K)$.
\end{proof}

\subsubsection{Proof of Theorem \ref{galcoh}}\label{firststep3}

We are now ready to prove Theorem \ref{galcoh}.

\begin{proof}[Proof of Theorem \ref{galcoh}]
Consider a finite Galois module $P$ over $K$ which becomes diagonalizable over $K_{\infty}$ and a cohomology class $\alpha \in \mathrm{Ker}\left(H^2(K,P) \rightarrow H^2(K_{\infty},P) \right)$. We want to prove that $N_q(\alpha/K)=K_q^M(K)$.

Given a prime number $\ell$, one can write the exact sequence of Galois modules:
$$0 \rightarrow \ell P \rightarrow P \rightarrow P/\ell P \rightarrow 0.$$
Now note that, since $P$ is diagonalizable over $K_{\infty}$, one can write an isomorphism of Galois modules over $K_{\infty}$:
$$P|_{K_{\infty}} \cong P' \times \prod_{i=1}^r \mu_{\ell^{s_i}}$$
for some Galois module $P'$ of order prime to $\ell$ and for some positive integers $s_1,...,s_r$. For each finite extension $L$ of $K$, the morphism $H^2(LK_{\infty},\ell P) \rightarrow H^2(LK_{\infty},P)$ can therefore be identified with the morphism $H^2(LK_{\infty},P')\times \prod_{i=1}^r H^2(LK_{\infty},\mu_{\ell^{s_i-1}}) \rightarrow H^2(LK_{\infty},P')\times\prod_{i=1}^r H^2(LK_{\infty},\mu_{\ell^{s_i}})$, which is always injective. Hence, by carrying out an induction on the exponent of $P$ in which we repeatedly apply Lemma \ref{easy} and in which the field $K$ varies, we can assume that $P$ is $\ell$-torsion for some prime number $\ell$.

Now consider a finite Galois extension $L$ of $K$ such that $\alpha|_{L}=0$ and $\mathrm{Gal}(\overline{L}/L)$ acts trivially on $P$. Fix a prime number $p$ and let $L_p$ be a field fixed by a $p$-Sylow of $\mathrm{Gal}(L/K)$. Consider the two following cases:

\paragraph*{1st case: $p\neq\ell$.} Since $\alpha|_{L}=0$, we see by a restriction-corestriction argument that $\alpha|_{L_p}=0$. Hence $N_q(\alpha/K)$ contains $[L_p:K]K_q^M(K)$.
\paragraph*{2nd case: $p=\ell$.} Since $\mathrm{Gal}(L/L_p)$ is a $p$-group and since $P$ is an $\mathbb{F}_p$-vector space, we can find a basis of $P$ such that all the elements of $\mathrm{Gal}(L/L_p)$ act on $P$ via a unipotent upper-triangular matrix. This means that we have a d\'evissage of Galois modules over $L_p$:
\begin{gather*}
0 \rightarrow \mathbb{Z}/p\mathbb{Z} \rightarrow P \rightarrow P_1 \rightarrow 0\\
0 \rightarrow \mathbb{Z}/p\mathbb{Z} \rightarrow P_1 \rightarrow P_2 \rightarrow 0\\
...\\
0 \rightarrow \mathbb{Z}/p\mathbb{Z} \rightarrow P_{s-1} \rightarrow P_s \rightarrow 0,
\end{gather*}
with $P_s=\mathbb{Z}/p\mathbb{Z}$. Since $P$ becomes diagonalizable over $K_{\infty}$, all these exact sequences split over $L_pK_{\infty}$. Hence, by applying Corollary \ref{sevbr} and Lemma \ref{easy}, it follows that $N_q(\alpha|_{L_p}/L_p)=K_q^M(L_p)$. Hence $N_q(\alpha/L)$ contains $[L_p:L]K_q^M(L)$.\\

Thus $N_q(\alpha/L)$ contains $[L_p:L]K_q^M(L)$ for every prime $p$, which implies that $N_q(\alpha/L)=K_q^M(L)$.
\end{proof}

\subsection{Second step: Principal homogeneous spaces}

In this second step, we prove theorem \ref{reduction}(i) for principal homogeneous spaces under smooth connected linear groups as well as theorem \ref{reduction}(ii). We start with the characteristic 0 case.

\begin{proposition}\label{split}
Let $q$ be a non-negative integer. Let $K$ be a field of characteristic 0 and with cohomological dimension at most $q+1$. Then for any principal homogeneous space $Z$ under a quasi-split reductive $K$-group $G$, we have $N_{q}(Z/K)=K_q^M(K)$.
\end{proposition}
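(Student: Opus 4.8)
The plan is to reduce the statement about an arbitrary principal homogeneous space under a quasi-split reductive group to the Galois-cohomological input provided by Corollary \ref{galcoh2}. First I would recall that, since $G$ is quasi-split, a theorem of Steinberg (see Section III.2.3 of \cite{GC}) applies: every class in $H^1(K,G)$ comes from $H^1(K,N)$ for a suitable subgroup $N$, and more importantly, after passing to the maximal torus $T$ contained in a Borel subgroup $B \subseteq G$, the torsor $Z$ can be analyzed via the torus $T$. The key reduction is that if $[Z] \in H^1(K,G)$, one can use the exact sequence coming from $T \hookrightarrow G$ (or the normalizer of a maximal torus) to express the obstruction to $Z$ having a rational point over a finite extension $L$ in terms of a class in $H^1(L,T)$, and then its boundary in $H^2$ of a module of multiplicative type.

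The core idea is the following: for a quasi-split group, a principal homogeneous space $Z$ under $G$ acquires a rational point over $L$ as soon as a certain associated class vanishes. Concretely, I would pick a maximal torus $T \subseteq B \subseteq G$; then $H^1(K,T) \to H^1(K,G)$ is surjective onto the image we care about (Steinberg / the fact that $G/B$ is a projective rational variety with a rational point), so write $[Z]$ as the image of a class $\tau \in H^1(K,T)$. Since $\mathrm{cd}(K) \leq q+1$ and $T$ is a torus, one has a description of $H^1(K,T)$ via a flasque or quasi-trivial resolution $1 \to T \to Q \to S \to 1$ with $Q$ quasi-trivial (so $H^1(L,Q)=0$ for all $L$ by Hilbert 90 and Shapiro), giving a boundary isomorphism $H^1(K,S) \cong \ldots$ and pushing the problem to a class $\alpha \in H^2(K,\widehat{S}^{\vee})$ — a class in $H^2$ with finite (or multiplicative-type) coefficients after a further dévissage. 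A finite extension $L/K$ kills $\tau$ (hence makes $Z(L) \neq \emptyset$) precisely when it kills this $H^2$-class, and Corollary \ref{galcoh2} then gives $N_q(\alpha/K) = K_q^M(K)$, which translates back to $N_q(Z/K) = K_q^M(K)$.

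More carefully, the cleanest route avoids $H^2$ of a torus directly: use that $Z$ has a point over $L$ iff $\tau|_L = 0$ in $H^1(L,T)$, and resolve $T$ by $1 \to F \to Q \to T \to 1$ with $Q$ quasi-trivial and $F$ finite of multiplicative type (possible since $\mathrm{char}\,K = 0$, by taking $Q$ to cover $T$ with finite kernel). The connecting map identifies $H^1(L,T)$ with a quotient of $H^2(L,F)$ (using $H^1(L,Q)=0$), so $\tau$ corresponds to a class $\alpha_\tau \in H^2(K,F)$ well-defined modulo the image of $H^1(K,T)$; a finite extension $L/K$ satisfies $Z(L)\neq\emptyset$ once $\alpha_\tau|_L = 0$. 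Applying Corollary \ref{galcoh2} to $\alpha_\tau$ (after a small additional argument to handle the "modulo image" ambiguity, or by choosing the resolution so that $H^1(K,Q)$ and $H^1(L,Q)$ vanish, making the correspondence exact) yields that $K_q^M(K)$ is spanned by norms from extensions killing $\alpha_\tau$, hence from extensions over which $Z$ has a point. Therefore $N_q(Z/K) = K_q^M(K)$.

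The main obstacle I anticipate is handling the ambiguity in the class $\alpha_\tau$: the correspondence between $H^1(K,T)$ and $H^2(K,F)$ is only well-defined up to the image of $H^1(K,Q)$, which need not vanish. The fix is to choose the quasi-trivial resolution carefully — e.g. using Ono's lemma or the standard construction embedding $T$ into a product of Weil restrictions $\prod R_{L_i/K}\mathbb{G}_m$ — so that $Q$ is a quasi-trivial torus; then $H^1$ of $Q$ vanishes over every finite extension, the boundary map $H^1(L,T) \to H^2(L,F)$ is injective, and $\tau|_L = 0$ in $H^1$ becomes exactly equivalent to $\alpha_\tau|_L = 0$ in $H^2$. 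A secondary technical point is verifying that a quasi-split $G$ really does allow us to lift $[Z]$ from $H^1(K,T)$; this is exactly Steinberg's theorem combined with the fact that $G/B$ has a rational point and is $K$-rational, so I would cite Section III.2.3 of \cite{GC} and move on. With these two points addressed, the proof is a direct dévissage feeding into Corollary \ref{galcoh2}.
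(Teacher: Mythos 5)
Your overall route is the paper's: lift the class of $Z$ to a maximal torus via Steinberg's theorem, transfer it to a class in $H^2$ of a finite Galois module via a resolution by quasi-trivial tori, and conclude with Corollary \ref{galcoh2}. So the strategy is sound; but two technical assertions in your write-up need repair before the argument is airtight.

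First, Steinberg's theorem does not say that $[Z]$ lifts to the maximal torus contained in a Borel subgroup: it says $H^1(K,G)=\bigcup_{T}\mathrm{Im}\left(H^1(K,T)\to H^1(K,G)\right)$ with $T$ running over all maximal $K$-tori of $G$ (Theorem 11.1 of \cite{Ste}; see also \cite{GC}, III.2.3), the torus depending on the class. This is harmless, since nothing in the rest of your argument uses that $T$ sits in a Borel. Second, and more substantively, a short exact sequence $1\to F\to Q\to T\to 1$ with $Q$ quasi-trivial and $F$ finite does \emph{not} exist for a general torus $T$: on character lattices it would force $\widehat{T}\otimes\mathbb{Q}$ to be a rational permutation module, which already fails for the norm-one torus of a cyclic cubic extension (and characteristic $0$ is irrelevant here). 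The alternative you mention, embedding $T$ into a product of Weil restrictions, yields a sequence $1\to T\to Q\to S\to 1$, i.e.\ the wrong direction, and does not produce a finite kernel. The correct tool is Ono's lemma in the precise form the paper uses (Theorem 1.5.1 of \cite{Ono}): $0\to F\to R_0\to T^n\times R_1\to 0$ with $R_0,R_1$ quasi-trivial and $F$ finite commutative. One then pushes the diagonal class $(\tilde z,\dots,\tilde z)\in H^1(K,T)^n$ into $H^2(K,F)$; since $H^1(L,R_0)=0$ for every finite extension $L$, the map $H^1(L,T)^n\to H^2(L,F)$ is injective, so any $L$ killing the resulting class $\alpha$ kills $\tilde z$, hence $[Z]$, hence gives $Z(L)\neq\emptyset$ (note that only this implication is needed, not the ``iff'' you assert, which is false since $H^1(L,T)\to H^1(L,G)$ need not be injective). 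With these substitutions your proof coincides with the paper's.
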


\begin{proof}[Proof]
Let $Z$ be a principal homogeneous space under a quasi-split reductive $K$-group $G$. Let $z$ be the class of $Z$ in $H^1(K,G)$. By Theorem 11.1 of \cite{Ste}, we have:
$$H^1(K,G) = \bigcup_{T\subseteq G} \mathrm{Im}\left( H^1(K,T) \rightarrow H^1(K,G) \right) $$
where $T$ runs over the maximal $K$-tori of $G$. Let then $T_z$ be a maximal $K$-torus of $G$ such that $z$ can be lifted to a class $\tilde{z} \in H^1(K,T_z)$. By Ono's lemma (Theorem 1.5.1 in \cite{Ono}), we can find an exact sequence:
$$0\rightarrow F \rightarrow R_0 \rightarrow T_z^n \times R_1 \rightarrow 0$$
for some positive integer $n$, some quasi-trivial tori $R_0$ and $R_1$ and some finite commutative algebraic group $F$. By considering the associated cohomology exact sequence, we get an injection:
$$H^1(K,T_z)^n \hookrightarrow H^2(K,F).$$
If $\alpha\in H^2(K,F)$ stands for the image of the $n$-tuple $(\tilde{z},...,\tilde{z})\in H^1(K,T_z)^n$, then $N_q(\alpha/K)$ is contained in $N_{q}(Z/K)$. But by corollary \ref{galcoh2}, we have $N_q(\alpha/K)=K_q^M(K)$. Hence $N_{q}(Z/K)=K_q^M(K)$.
\end{proof}

\begin{theorem}\label{phs}
Let $q$ be a non-negative integer. Let $K$ be a field of characteristic 0 and with cohomological dimension at most $q+1$. Then for any principal homogeneous space $Z$ under a connected linear $K$-group $G$, we have $N_{q}(Z/K)=K_q^M(K)$.
\end{theorem}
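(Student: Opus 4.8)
The plan is to reduce the case of a general connected linear $K$-group $G$ to the quasi-split reductive case already handled in Proposition \ref{split}. The standard structural device is to find, for a given principal homogeneous space $Z$ under $G$ with class $z \in H^1(K,G)$, a $K$-embedding of $G$ into a connected group admitting a suitable filtration whose graded pieces are either quasi-split reductive or geometrically negligible for the functor $N_q(-/K)$. Concretely, I would proceed in three reductions. First, since $\mathrm{char}\,K = 0$, the unipotent radical $R_u(G)$ is split, and $H^1(K, R_u(G)) = 0$ (it is even trivial over every extension of $K$); by the cohomology exact sequence associated to $1 \to R_u(G) \to G \to G^{\mathrm{red}} \to 1$, the map $H^1(K,G) \to H^1(K,G^{\mathrm{red}})$ is a bijection, and moreover $Z$ and its image $Z^{\mathrm{red}}$ have the same finite splitting fields (a point of $Z^{\mathrm{red}}$ over $L$ lifts to an $L$-point of $Z$ because the obstruction lives in $H^1(L,R_u(G)) = 0$). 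Hence $N_q(Z/K) = N_q(Z^{\mathrm{red}}/K)$ and we may assume $G$ is reductive.

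Second, I would reduce from reductive to quasi-split reductive. Here the natural tool is a $z$-twisting / flasque resolution argument: embed $G$ into a quasi-split reductive group. More precisely, one uses that any reductive $K$-group $G$ is an inner form of a quasi-split group $G^*$, and there is an exact sequence $1 \to G \to H \to S \to 1$ (or a variant) with $H$ quasi-split reductive and $S$ a torus, obtained for instance from the standard embedding of $G$ as a Levi or via a $z$-extension combined with Ono-type arguments on the central torus; alternatively one can use the Borovoi–Kunyavskii flasque resolution $1 \to G \to H \to Q \to 1$ with $H$ having a suitable structure. The point is that in the cohomology sequence
\[
H^1(K,G) \to H^1(K,H) \to H^1(K,S),
\]
after twisting by a cocycle representing $z$, the fiber of $z$'s image is controlled by $H^0(K,S)$-type data, and in any case: if $Z$ maps to a principal homogeneous space $W$ under $H$ and $W$ maps to a principal homogeneous space under a torus $S$, then over any finite $L$ where $W$ acquires a point, $Z$ acquires a point over a further finite extension dominated by the relevant torsor under $S$ — and torsors under tori are handled by the same Ono's lemma plus Corollary \ref{galcoh2} argument as in Proposition \ref{split}. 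So $N_q(W/K) \subseteq$ (something expressible via norms) and one chases $N_q(Z/K) = K_q^M(K)$ back from $N_q(W/K) = K_q^M(K)$ and the torus case.

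Third, with $G$ quasi-split reductive, invoke Proposition \ref{split} directly. Assembling: for the given $Z$, pass to $Z^{\mathrm{red}}$, then embed into the quasi-split $W$, apply Proposition \ref{split} to get $N_q(W/K) = K_q^M(K)$, and then use the torus-torsor input (via Ono's lemma, the injection $H^1(K,T)^n \hookrightarrow H^2(K,F)$, and Corollary \ref{galcoh2}) exactly as in the proof of Proposition \ref{split} to transfer this down to $Z$. Throughout one repeatedly uses the elementary but crucial fact that if $Z_1 \to Z_2$ is a morphism of $K$-schemes and every finite extension splitting $Z_2$ can be refined to one splitting $Z_1$ with the ratio of degrees being a norm index already accounted for, then $N_q(Z_2/K) = K_q^M(K)$ implies $N_q(Z_1/K) = K_q^M(K)$; this is the same bookkeeping that appears in Lemma \ref{easy} and in the proof of Proposition \ref{onedir}.

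The main obstacle I expect is the second reduction: producing an embedding of an arbitrary reductive $G$ into a quasi-split group with a torus quotient in a way that is compatible with the torsor $Z$ and keeps the splitting-field bookkeeping under control. Flasque resolutions and $z$-extensions interact subtly with inner twisting, and one must be careful that the "error term" living in cohomology of tori is genuinely handled by the Ono's lemma argument rather than merely pushed around. The characteristic-$0$ hypothesis is used essentially here (vanishing of $H^1$ of unipotent groups, and the structure theory of reductive groups over fields of characteristic $0$), which is why the positive-characteristic case is deferred to a separate reduction.
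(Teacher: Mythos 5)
Your first reduction (killing the unipotent radical) is correct and is exactly what the paper does. The genuine gap is in your second reduction, from reductive to quasi-split, which is the heart of the theorem. The structural device you propose --- an exact sequence $1 \to G \to H \to S \to 1$ with $H$ quasi-split reductive and $S$ a torus --- cannot exist unless $G$ is already quasi-split: such a sequence forces $H^{\mathrm{der}} \subseteq G$, and then for a Borel subgroup $B \subseteq H$ defined over $K$ the identity component of $B \cap G$ is a Borel subgroup of $G$ defined over $K$. The alternatives you invoke go in the opposite direction: a $z$-extension or a flasque resolution has the shape $1 \to S \to H' \to G \to 1$, and there $H'$ is quasi-split only when $G$ is, so neither produces a torsor $W$ under a quasi-split group with the splitting-field relationship to $Z$ that your ``chase back from $N_q(W/K)$'' requires. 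Pushing $Z$ forward along an embedding $G \hookrightarrow H$ (say into $\mathrm{SL}_n$) does not help either: $W(L)\neq\emptyset$ only tells you that $[Z]|_L$ comes from an $L$-point of $H/G$, i.e.\ you land in the world of homogeneous spaces with nontrivial stabilizers, which is precisely the harder third step of the paper and not available at this stage. So as written the middle step of your argument has no valid input, and the hedged phrasing (``controlled by $H^0(K,S)$-type data'', ``one chases back'') does not close it.

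The fix is the fact you mention in passing but never exploit: $G$ is an inner twist ${_a}H$ of a quasi-split group $H$ (Springer, Prop.~16.4.9), and no embedding is needed at all. Apply Proposition \ref{split} twice. First apply it to the $H$-torsor defined by the twisting class $[a]$: for a given $x \in K_q^M(K)$ this produces finite extensions $K_1,\dots,K_r$ with $[a]|_{K_i}=0$ and $x=\prod_i N_{K_i/K}(x_i)$. Over each $K_i$ one has $G_{K_i}\cong H_{K_i}$ quasi-split, so a second application of Proposition \ref{split}, now to $Z_{K_i}$, yields extensions $K_{i,j}$ with $Z(K_{i,j})\neq\emptyset$ and $x_i=\prod_j N_{K_{i,j}/K_i}(x_{i,j})$; composing norms gives $x\in N_q(Z/K)$. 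This two-fold use of the quasi-split case replaces entirely the exact-sequence/flasque machinery you were trying to set up, and Ono's lemma is only needed where it already appears, inside the proof of Proposition \ref{split}.
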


\begin{proof}
Let $G$ a connected linear $K$-group, $U$ its unipotent radical and $Z$ a principal homogeneous space under $G$. Then $Z'=Z/U$ is naturally a $G/U$-torsor and, since $H^1$ is trivial for unipotent groups (cf.~Proposition 6 of \cite[III.2.1]{GC}), we know that every fiber of $Z\to Z'$ (which is a $U$-torsor) has rational points. Then it is easy to see that the result holds for $Z$ as soon as it holds for $Z'$.

Assume then that $G$ is reductive and let $Z$ be as above. Then there exists a quasi-split reductive $K$-group $H$ and a class $[a]\in H^1(K,H)$ such that $G={_a}H$ (Proposition 16.4.9 of \cite{springer}). Fix an element $x$ in $K_q^M(K)$. By Proposition \ref{split}, we can find some finite extensions $K_1,...,K_r$ of $K$ and an $r$-tuple $(x_1,...,x_r) \in \prod_{i=1}^r K_q^M(K_i)$ such that:
\[\begin{cases} 
[a]|_{K_i}=0\text{ for each }i\in \{1,...,r\},\\
x = \prod_{i=1}^r N_{K_i/K}(x_i).
\end{cases}\]
Hence for each $i$, the reductive group $G_{K_i}$ is quasi-split. By applying Proposition \ref{split} once again, we can find some finite extensions $K_{i,1},...,K_{i,r_i}$ of $K_i$ and an $r_i$-tuple $(x_{i,1},...,x_{i,r_i}) \in \prod_{j=1}^{r_i} K_q^M(K_{i,j})$ such that:
\[\begin{cases} 
Z(K_{i,j})\neq\emptyset\text{ for each }j\in \{1,...,r_i\},\\
x_i = \prod_{j=1}^{r_i} N_{K_{i,j}/K_i}(x_{i,j}).
\end{cases}\]
Hence $x = \prod_{i=1}^{r} \prod_{j=1}^{r_i} N_{K_{i,j}/K}(x_{i,j})$, and $x \in N_q(Z/K)$.
\end{proof}

We now deal with fields of positive characteristic.

\begin{theorem} \label{poscar}
Let $q$ be a non-negative integer and let $K$ be any field of characteristic $p>0$. 
\begin{itemize}
\item[(i)] If $K$ is perfect and has cohomological dimension at most $q+1$, then for any principal homogeneous space $Z$ under a smooth linear connected group over $K$, we have $N_q(Z/K)=K_q^M(K)$.
\item[(ii)] If $K$ is imperfect and all its finite extensions have separable cohomological dimension at most $q+1$, we have $N_q(Z/K)=K_q^M(K)$ for any principal homogeneous space $Z$ under a reductive group over $K$.
\end{itemize}

\end{theorem}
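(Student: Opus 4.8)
The plan is to reduce Theorem~\ref{poscar} to the characteristic-zero results already established by lifting the situation to mixed characteristic. First, as in the opening paragraph of the proof of Theorem~\ref{phs}, I would reduce part~(i) to the reductive case: over the perfect field $K$ the unipotent radical $U$ of $G$ is defined over $K$, the fibres of $Z\to Z/U$ are $U$-torsors and hence have rational points (\cite[III.2.1, Prop.~6]{GC}), so $N_q(Z/K)=N_q((Z/U)/K)$; in part~(ii) the group is reductive from the start. So assume $G$ reductive, and choose a complete discrete valuation ring $\mathcal{O}$ of mixed characteristic with residue field $K$ and uniformiser $p=\operatorname{char}K$ — the ring of Witt vectors $W(K)$ when $K$ is perfect, a Cohen ring when $K$ is imperfect — with fraction field $\mathcal{K}$ of characteristic $0$. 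By the classification of reductive group schemes, $G$ lifts to a reductive $\mathcal{O}$-group scheme $\mathcal{G}$; since $\mathcal{G}$ is smooth and $\mathcal{O}$ henselian, reduction gives a bijection $H^1_{\text{\'et}}(\mathcal{O},\mathcal{G})\xrightarrow{\ \sim\ }H^1(K,G)$, so $Z$ lifts to a $\mathcal{G}$-torsor $\mathcal{Z}$ over $\mathcal{O}$ with generic fibre $Z_\eta$, a torsor under $G_\eta:=\mathcal{G}_{\mathcal{K}}$.

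The feature that makes the reduction work is that $Z_\eta$ is split by the maximal unramified extension $\mathcal{K}^{\mathrm{nr}}$ of $\mathcal{K}$: over $\mathcal{O}^{\mathrm{nr}}$ the special fibre of $\mathcal{Z}$ is a torsor over the separable closure $K^s$ of $K$ (which is $\overline K$ when $K$ is perfect), hence trivial, so $\mathcal{Z}_{\mathcal{O}^{\mathrm{nr}}}$, and therefore $(Z_\eta)_{\mathcal{K}^{\mathrm{nr}}}$, is trivial. Consequently any finite extension of $\mathcal{K}$ over which $Z_\eta$ has a point may be taken unramified; such extensions correspond to finite (separable, in case~(ii)) extensions $k_L/K$, which have (separable) cohomological dimension $\le q+1$ by hypothesis; and then $Z(k_L)\neq\emptyset$, since the $\mathcal{G}$-torsor $\mathcal{Z}_{\mathcal{O}_L}$ is generically trivial and hence trivial (injectivity of $H^1(\mathcal{O}_L,\mathcal{G})\to H^1(L,G)$ for the reductive $\mathcal{G}$), so that $\mathcal{Z}(k_L)\neq\emptyset$. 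Finally, for unramified $L/\mathcal{K}$ the Milnor-$K$-theory norm $N_{L/\mathcal{K}}$ is compatible, via reduction modulo $p$ on the unramified part $K^M_q(\mathcal{O}_L)\to K^M_q(\mathcal{O})$, with $N_{k_L/K}\colon K^M_q(k_L)\to K^M_q(K)$, and $K^M_q(\mathcal{O})\twoheadrightarrow K^M_q(K)$.

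With these translations, it is enough to show — restricting throughout to \emph{unramified} finite extensions of $\mathcal{K}$ — that every element of $K^M_q(\mathcal{O})$ is, modulo $\ker(K^M_q(\mathcal{O})\to K^M_q(K))$, a combination of norms from unramified extensions splitting $Z_\eta$; reducing modulo $p$ then yields $N_q(Z/K)=K^M_q(K)$. For this I would run the proofs of Proposition~\ref{split} and Theorem~\ref{phs} in this relative setting: twist $G_\eta$ to a quasi-split form by a cocycle which (by the previous paragraph) is again killed by $\mathcal{K}^{\mathrm{nr}}$, apply Steinberg's Theorem~11.1 and Ono's lemma to an unramified maximal $\mathcal{K}$-torus of $G_\eta$ through which $[Z_\eta]$ factors, obtaining a finite group scheme $F$ of multiplicative type, diagonalisable over $\mathcal{K}^{\mathrm{nr}}$, and an unramified class $\alpha\in H^2(\mathcal{K},F)$ with $N_q(\alpha/\mathcal{K})\subseteq N_q(Z_\eta/\mathcal{K})$; then invoke the first-step machinery (Theorem~\ref{galcoh}, resting on the Rost–Suslin theory of norm varieties, which is available for $\mu_\ell$-symbols over any field of characteristic $\neq\ell$) for the pair $(\mathcal{K},\mathcal{K}^{\mathrm{nr}})$ — the $p$-primary part of $\alpha$ being dealt with separately, via cyclic $p$-algebras and Bloch–Gabber–Kato as in the treatment of imperfect fields in Proposition~\ref{onedir}, and being in any case harmless when the residue extensions are perfect. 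Conditions~(A) and~(B) of Theorem~\ref{galcoh} would be checked using that $L\mathcal{K}^{\mathrm{nr}}$ is unramified over $L$ with separably closed residue field, that the relevant relative Brauer groups are generated by cyclic algebras inflated from the residue field, and that the only $H^{q+2}$-classes that matter are those detected on unramified extensions, whose residue fields have cohomological dimension $\le q+1$.

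The step I expect to be the main obstacle is precisely this control of cohomological dimension under the lift: one has $\mathrm{cd}(\mathcal{K})=\mathrm{cd}(K)+1$ (and likewise for $\mathrm{sd}$), so Corollary~\ref{galcoh2} is \emph{not} directly available over $\mathcal{K}$, and in fact $N_q(Z_\eta/\mathcal{K})$ is genuinely smaller than $K^M_q(\mathcal{K})$ in general — it typically omits symbols involving the uniformiser $p$. The entire design of the argument — lifting only to a henselian pair, keeping the cohomology class unramified, and restricting all field extensions to unramified ones — is meant to ensure that exactly those ``missing'' symbols disappear upon specialising to the residue field, so that the hypothesis one really uses over $\mathcal{K}$ is a cohomological bound transverse to the special fibre, namely $\mathrm{cd}(K)\le q+1$ (resp.\ $\mathrm{sd}(k_L)\le q+1$ for all finite separable $k_L/K$). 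Making this precise — isolating the correct auxiliary extension, verifying hypotheses~(A)–(B) of Theorem~\ref{galcoh} in mixed characteristic, handling the $p$-primary contribution, and checking that the torus-theoretic manipulations (reduction to quasi-split, the maximal torus in Steinberg's theorem, the flasque resolution in Ono's lemma) can all be performed compatibly with the integral structure over $\mathcal{O}$ — is where the genuine work lies.
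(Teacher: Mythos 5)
Your overall strategy --- lift to a complete discrete valuation ring of mixed characteristic ($W(K)$, resp.\ a Cohen ring), lift the torsor using smoothness and henselianity, and in the imperfect case invoke Theorem~\ref{galcoh} with $K_\infty=\tilde K^{\mathrm{nr}}$ --- is indeed the route the paper takes. But the mechanism you propose for descending from $\mathcal K$ to $K$ is exactly where the argument breaks, and you leave it open: you want to work in degree $q$, with $K_q^M(\mathcal O)$ modulo the kernel of reduction, ``restricting throughout to unramified finite extensions of $\mathcal K$.'' The machinery of the first step cannot deliver that restriction: the closed points of Rost's $\ell$-generic splitting varieties, and the Sylow-type corestriction arguments in Section~\ref{firststep1} and~\ref{firststep3}, produce norms from \emph{arbitrary} finite extensions of $\mathcal K$, with no control whatsoever on ramification; nor is it true that splitting fields of $Z_\eta$ ``may be taken unramified.'' Since you explicitly flag this as ``where the genuine work lies,'' the proposal is a plan whose central difficulty is unresolved rather than a proof.

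The paper resolves this by a shift of degree instead of a restriction on ramification. After reducing over the residue field to the case where $G$ is an (isotrivial) torus $T$ --- via triviality of $H^1$ for unipotent groups, Steinberg's theorem (Serre's version over imperfect fields for part (ii), together with a non-abelian d\'evissage along $1\to H\to G\to S\to 1$) and Ono's lemma --- one lifts $T$ and $Z$ to $\tilde{\mathcal T}$, $\tilde{\mathcal Z}$ over $W(K)$ and proves the \emph{full} equality $N_{q+1}(\tilde Z/\tilde K)=K_{q+1}^M(\tilde K)$ in degree $q+1$, with no ramification condition: in case (i) this is Theorem~\ref{phs}, since $\mathrm{cd}(\tilde K)\le q+2$ by Kato; in case (ii) it is Theorem~\ref{galcoh} with $K_\infty=\tilde K^{\mathrm{nr}}$, hypotheses (A) and (B) being checked via Kato--Gille and the lemma on ${_p}\mathrm{Br}(\tilde K^{\mathrm{nr}}/\tilde K)$. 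One then applies this to the single symbol $\{p,\tilde u_1,\dots,\tilde u_q\}$ containing the uniformizer; the compatibility of the Milnor norm with residue maps (Proposition 7.4.1 of \cite{GS}) turns the resulting norm decomposition into an expression of $\{u_1,\dots,u_q\}$ as a product of norms from the residue fields $k_i$ of the (possibly ramified) splitting fields $\tilde K_i$, and $Z(k_i)\neq\emptyset$ because $H^1(\mathcal O_{\tilde K_i},\tilde{\mathcal T})\to H^1(\tilde K_i,\tilde T)$ is injective (inflation from the unramified quotient). This degree-$(q+1)$ symbol-with-uniformizer trick, letting the residue map absorb the ramification, is the idea missing from your write-up; it also lets the paper avoid both your appeal to Grothendieck--Serre-type injectivity for general reductive group schemes (only the elementary torus case is needed) and any separate treatment of the $p$-primary part in degree $q$.
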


\begin{proof}
We start by proving (i). Let $Z$ be a principal homogeneous space under a smooth linear connected $K$-group $G$. Let also $\{u_1,...,u_q\}$ be a symbol in $K_q^M(K)$. We want to prove that $\{u_1,...,u_q\}\in N_q(Z/K)$.

Since $K$ is a perfect field, the triviality of $H^1$ for unipotent groups still holds in this case. Then, by proceeding as in the proofs of Proposition \ref{split} and Theorem \ref{phs}, we may and do assume that $G$ is a torus $T$. We can then find an isotrivial torus $\tilde{\mathcal{T}}$ on the ring of Witt vectors $W(K)$ which lifts $T$. Moreover, since the map $H^1(W(K),\tilde{\mathcal{T}}) \rightarrow H^1(K,T)$ is an isomorphism (Theorem 11.7 of \cite{gro}), we can find a lifting $\tilde{\mathcal{Z}}$ of $Z$ to $W(K)$. Let $\tilde{T}$ and $\tilde{Z}$ be the generic fibers of $\tilde{\mathcal{T}}$ and $\tilde{\mathcal{Z}}$ respectively: they are defined on the fraction field $\tilde{K}$ of $W(K)$.

By Theorem 3 of \cite{Kato}, the cohomological dimension of $\tilde{K}$ is at most $q+2$. Hence, by Theorem \ref{phs}, we know that $N_{q+1}(\tilde{Z}/\tilde{K})=K_{q+1}^M(\tilde{K})$. We can therefore find $r$ finite extensions $\tilde{K}_1,...,\tilde{K}_r$ of $\tilde{K}$ and an $r$-tuple $(x_1,...,x_r) \in \prod_{i=1}^r K_q^M(\tilde{K}_i)$ such that:
$$\begin{cases} 
\tilde{Z}(\tilde{K}_i)\neq \emptyset\text{ for each }i\in \{1,...,r\}, \\
\{p,\tilde{u}_1,...,\tilde{u}_q\} = \prod_{i=1}^r N_{\tilde{K}_i/\tilde{K}}(x_i)
 \end{cases} $$  
 for some liftings $\tilde{u}_1,...,\tilde{u}_q\in W(K)^{\times}$ of $u_1,...,u_q$. Denote by $k_1,...,k_r$ the residue fields of $\tilde{K}_1,...,\tilde{K}_r$. By using the compatibility of the norm morphism in Milnor $K$-theory with the residue maps (Proposition 7.4.1 of \cite{GS}), we deduce that $\{u_1,...,u_q\}$ is a product of norms coming from the $k_i$'s. Moreover, for each $i\in \{1,...,n\}$, the restriction morphism $H^1(\mathcal{O}_{\tilde{K}_i},\tilde{\mathcal{T}}) \rightarrow H^1(\tilde{K}_i,\tilde{T})$ can be identified with the inflation morphism $H^1(\tilde{K}_i^{nr}/\tilde{K}_i,H^0(\tilde{K}_i^{nr},\tilde{T})) \rightarrow H^1(\tilde{K}_i,\tilde{T})$ and is therefore injective. This implies that $Z(k_i)\neq \emptyset$, so that $\{u_1,...,u_q\}\in N_q(Z/K)$ and (i) is proved.
 
 We now prove (ii). Let $Z$ be a principal homogeneous space under a reductive  $K$-group $G$. There is an exact sequence:
 $$1 \rightarrow H \rightarrow G \rightarrow S \rightarrow 1$$
 in which $H$ is semisimple and $S$ is an isotrivial torus. Such a sequence induces a cohomology exact sequence:
 $$H^1(K,H) \rightarrow H^1(K,G) \rightarrow H^1(K,S).$$
 Hence, by using a variant of Lemma \ref{easy} for non-abelian cohomology, we can reduce to the case where $G$ is a semisimple group or an isotrivial torus. But the case where $G$ is semisimple can itself be reduced to the case where $G$ is an isotrivial torus by proceeding as in the proofs of Proposition \ref{split} and Theorem \ref{phs}, by observing that Theorem 11.1 of \cite{Ste} still holds for semisimple groups over imperfect fields (see Theorem 2 of \cite{Serre}) and by using the isotriviality of maximal tori of semisimple groups. We henceforth assume that $G$ is an isotrivial torus $T$. By adopting the same notations as in part (i) and by proceeding exactly in the same way, we only need to prove that $N_{q+1}(\tilde{Z}/\tilde{K}) =K_{q+1}^M(\tilde{K})$. 
 
 Here, the field $\tilde{K}$ need not have cohomological dimension at most $q+2$, and hence we cannot directly use Theorem \ref{phs}.
 But we know that the torus $\tilde{T}$ is unramified. Hence, by Hilbert's Theorem 90, we know that $[Z] \in \mathrm{Ker}\left( H^1(\tilde K,\tilde{T}) \rightarrow H^1(\tilde K^{nr},\tilde{T})\right)$. By using Ono's lemma just as in Proposition \ref{split}, we see that we only need to prove that, for any finite Galois module $M$ over $K$ which becomes diagonalisable over $K^{nr}$ and any cohomology class $\alpha \in \mathrm{Ker}\left(  H^2(\tilde{K},M)\rightarrow H^2(\tilde{K}^{nr},M)\right)$, we have $N_{q+1}(\alpha/\tilde{K})=K_{q+1}^M(\tilde{K})$. But since all finite extensions of $K$ have separable cohomological dimension at most $q+1$, Lemma 1 of \cite{gille} and Theorem 3(1) of \cite{Kato} imply that the restriction map $H^{q+3}(L,\mu_p^{\otimes (q+2)}) \rightarrow H^{q+3}(L^{nr},\mu_p^{\otimes (q+2)})$ is injective for every finite extension $L$ of $\tilde{K}$. One can therefore apply Theorem \ref{galcoh} with $\tilde{K}_{\infty}=\tilde{K}^{nr}$, provided that one checks the following lemma.
\end{proof}

\begin{lemma}
Let $K$ be a complete discrete valuation field of mixed characteristic $(0,p)$. Let $k$ be the residue field of $K$ and assume that $K$ contains a primitive $p$-th root of unity. Then the group ${_p}\br(K^{nr}/K)$ is spanned by cyclic central simple algebras.
\end{lemma}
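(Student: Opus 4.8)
The plan is to reduce the statement about $_p\br(K^{nr}/K)$ to a statement about the residue field $k$, and then invoke the known structure of the $p$-torsion of the Brauer group of a complete discrete valuation field. First I would recall that since $K$ has mixed characteristic $(0,p)$ and contains a primitive $p$-th root of unity $\zeta_p$, the field $K$ is a finite extension of $\mathbb{Q}_p(\zeta_p)$-type only if $k$ is finite; in general $K$ is a complete d.v.f. over $\mathbb{Q}_p$ with arbitrary residue field $k$ of characteristic $p$. The key input is that $_p\br(K^{nr}/K)$ sits in an exact sequence coming from the Hochschild--Serre spectral sequence for $K^{nr}/K$, whose Galois group is $\widehat{\Z}$ (the absolute Galois group of $k$, when $k$ is quasi-finite) or more generally $\gal(k^{sep}/k)$: one gets
\[
0 \to H^1(\gal(k^{sep}/k), (K^{nr})^{\times}) \to {_p}\br(K^{nr}/K)\text{-part} \to \dots
\]
but since $H^1(\gal(k^{sep}/k),(K^{nr})^{\times})=0$ by a Hilbert 90 / valuation argument (the valuation splits off a copy of $\Z$ with trivial cohomology in degree $1$ over the unramified tower after accounting for units, and units have trivial $H^1$), the relevant piece of $_p\br(K^{nr}/K)$ is controlled by $H^2$ of the residue data twisted appropriately.

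The cleaner route, which I would actually follow, is this. By a theorem on the Brauer group of a complete discretely valued field (see e.g. Serre, \emph{Local Fields}, or the residue exact sequence), there is an exact sequence
\[
0 \to \br(\calo_K) \to \br(K) \xrightarrow{\ \partial\ } H^1(k,\Q/\Z)
\]
and an analogous sequence over $K^{nr}$; chasing these and using that $\br(\calo_K) = \br(k^{h})$-type invariants, one reduces $_p\br(K^{nr}/K)$ to the ``unramified over $K$, split by $K^{nr}$'' classes, i.e. essentially to $\br(\calo_K) \cap {_p}\br(K)$ modulo what is killed by inflation, which is $H^2$ of the residue field with $\mu_p$-coefficients, i.e. $_p\br(k)$. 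Now $k$ has characteristic $p$, so $_p\br(k)$ is spanned by cyclic ($p$-)algebras of the form $[a,b)$ (the Artin--Schreier--type cyclic algebras) by the classical description of the $p$-torsion of the Brauer group in characteristic $p$ (this is part of Theorem 9.2.4 of \cite{GS}, the identification of $_p\br(k)$ with $\Omega^1_k/(\text{Artin--Schreier})$, each summand being a cyclic algebra). Finally I would lift each such cyclic algebra over $k$ to a cyclic central simple algebra over $K$: a cyclic algebra over $k$ corresponds to a pair (cyclic extension of $k$, element of $k^{\times}$), the cyclic extension lifts to an unramified cyclic extension of $K$ (by the equivalence between finite étale $k$-algebras and unramified extensions of $K$, using completeness), and the element lifts to a unit of $\calo_K$; the resulting cyclic algebra over $K$ is split by $K^{nr}$ (as its cyclic subextension is unramified) and has the prescribed residue, so it differs from the original class by something in the kernel, which one handles inductively.

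The main obstacle I anticipate is bookkeeping around the precise identification of $_p\br(K^{nr}/K)$ with residue data: one must be careful that classes in $_p\br(K)$ split by $K^{nr}$ are exactly the \emph{unramified} ones (those in $\br(\calo_K)$), and that $\br(\calo_K)[p] \hookrightarrow \br(k)[p]$ is an isomorphism onto the $p$-torsion — here completeness of $K$ is essential (it is this isomorphism, a form of Hensel's lemma for the Brauer group, that fails for merely Henselian or non-complete fields, and it is precisely why the lemma is stated for complete d.v.f.'s). Once that identification is in hand, the passage from ``spanned by cyclic algebras over $k$'' to ``spanned by cyclic algebras over $K$'' is routine: lift the cyclic data as above, check the lifted algebra is unramified (its maximal cyclic subfield being unramified) and split by $K^{nr}$, and conclude. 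A secondary subtlety is the case where $k$ is imperfect, but the characteristic-$p$ description of $_p\br(k)$ via differentials and cyclic algebras is valid for arbitrary $k$ of characteristic $p$, so no extra hypothesis on $k$ is needed.
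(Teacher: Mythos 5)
There is a genuine gap, and it sits exactly at the step you flag as your ``main obstacle'': it is \emph{not} true that the classes in ${_p}\br (K)$ split by $K^{nr}$ are exactly the unramified ones (those in $\br(\calo_K)$, i.e.\ with trivial residue). A cyclic algebra $(\tilde\chi,\pi)$, where $\tilde\chi$ is an unramified character of order $p$ and $\pi$ is a uniformizer, is split by the unramified cyclic extension cut out by $\tilde\chi$, hence lies in ${_p}\br(K^{nr}/K)$, yet its residue is $\chi\neq 0$. Concretely, for $K=\mathbb{Q}_p(\zeta_p)$ one has $\br(\calo_K)\cong\br(\mathbb{F}_p)=0$ while ${_p}\br(K^{nr}/K)={_p}\br(K)\cong\Z/p\Z$; since your construction only produces cyclic algebras lifted from $k$ with unit parameters, i.e.\ classes of trivial residue, it cannot span the group in this (completely standard) example. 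The correct structure is the exact sequence $0\to\br k\to\br(K^{nr}/K)\to H^1(k,\Q/\Z)\to 0$ (exercise 3 of section XIII.3 of \cite{corpsloc}, which is what the paper uses): $H^1(k,\Q/\Z)$ appears as a genuine quotient of $\br(K^{nr}/K)$, not as something that dies after intersecting with the kernel of restriction to $K^{nr}$.

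The other half of your argument --- that ${_p}\br\, k$ is spanned by cyclic algebras in characteristic $p$ and that these lift, via unramified cyclic extensions and unit lifts, to cyclic algebras over $K$ split by $K^{nr}$ --- is sound and coincides with how the paper treats the subgroup $\br k$ (the paper quotes Albert's theorem, Theorem 9.1.8 of \cite{GS}; for ``spanned by'' your weaker generation by symbols $[a,b)$ would also do). What is missing is the treatment of the quotient: given $\alpha\in{_p}\br(K^{nr}/K)$ with residue $\chi$, one must first subtract the cyclic algebra $(\tilde\chi,\pi)$, whose residue is $\chi$ and which is $p$-torsion and split by $K^{nr}$; the difference then has trivial residue and is handled by your lifting step. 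This is precisely the paper's proof, which concludes that every element of ${_p}\br(K^{nr}/K)$ is a sum of at most two cyclic algebras. A minor further inaccuracy: $\br(\calo_K)\cong\br(k)$ holds already for Henselian discrete valuation rings, so completeness is not ``essential'' for that identification in the way you suggest.
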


\begin{proof}
By exercise 3 of section XIII.3 of \cite{corpsloc}, we have the exact sequence:
$$0 \rightarrow \br\, k \rightarrow \br(K^{nr}/K) \rightarrow H^1(k,\mathbb{Q}/\mathbb{Z}) \rightarrow 0.$$
By Albert's theorem (Theorem 9.1.8 of \cite{GS}), every element in ${_p}\br\, k$ is represented by a cyclic algebra. Moreover, if $\chi$ is an element in $H^1(k,\mathbb{Q}/\mathbb{Z})$, then a lifting of $\chi$ in $\br(K^{nr}/K)$ is given by the cyclic algebra $(\tilde{\chi},\pi)$ where $\tilde{\chi}\in H^1(K,\mathbb{Q}/\mathbb{Z})$ is the unramified lifting of $\chi$ and $\pi$ is a uniformizer in $K$. Hence every element in ${_p}\br(K^{nr}/K)$ is a sum of at most two cyclic algebras.
\end{proof}

\begin{remarque}
With these two first steps, we have proved that a perfect field $K$ of cohomological dimension at most $q+1$ is $C_{\PHS}^q$. We have also proved part (ii) of Theorem \ref{reduction}.
\end{remarque}

\subsection{Third step: Homogeneous spaces}\label{last}

We finally prove part (i) of Theorem \ref{reduction}. For this purpose, recall the following theorem of Springer (Theorem 3.4 of \cite{SpringerH2}):

\begin{theorem}
Let $K$ be a perfect field with algebraic closure $\bar K$, let $L$ be a $K$-kernel with underlying (smooth) $\bar K$-algebraic group $\bar G$. Then for every $\eta\in H^2(K,L)$, there exists a (smooth) finite nilpotent $\bar K$-subgroup $\bar F\subset\bar G$, a $K$-kernel $F$ with underlying $\bar K$-group $\bar F$ and an injective morphism of $K$-kernels $F\to L$ compatible with the inclusion $\bar F\subset\bar G$ such that the induced relation $H^2(K,F)\multimap H^2(K,L)$ has $\eta$ in its image.
\end{theorem}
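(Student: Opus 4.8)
The plan is to prove the statement by a descending d\'evissage on the structure of $\bar G$ --- replacing $\bar G$ by smaller subgroups that stay stable for the $K$-kernel structure --- and then to cut out a \emph{nilpotent} finite subgroup by a Sylow argument, the nilpotence coming, as usual, from the fact that a finite $\ell$-group is nilpotent. Over the perfect field $K$ two standard inputs carry the argument: the unipotent radical contributes nothing to non-abelian cohomology (both $H^1(K,-)$ and the behaviour of neutral classes of $H^2(K,-)$ are trivial for unipotent groups), and for a connected group a maximal torus together with its normalizer can be chosen compatibly with the outer Galois action $\kappa$ defining $L$. Throughout one works with cocycle representatives for classes in the $H^2$ of a $K$-kernel and checks, at each step, that kernel structures restrict to the subgroups used and that the relations $H^2(-,F)\multimap H^2(-,L)$ compose.

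\textbf{Step 1: reduction to the reductive case.} The identity component $\bar G^{\circ}$ and the unipotent radical $\bar R:=\bar R_u(\bar G^{\circ})$ are characteristic in $\bar G$, hence inherit $K$-kernel structures, and $\bar G/\bar R$ has reductive identity component. As $\bar R$ is unipotent and $K$ perfect, pushing $\eta$ to $H^2(K,L/L_{\bar R})$ loses no information, and --- in characteristic zero --- a Levi subgroup containing a prescribed finite subgroup of $\bar G/\bar R$ can be taken kernel-stable, so a finite nilpotent sub-kernel capturing the image of $\eta$ lifts to one of $L$ capturing $\eta$. (In characteristic $p$ the $p$-primary part requires the finite subgroup to be \'etale rather than of multiplicative type; this is arranged separately using the good behaviour of $p$-cohomology of perfect fields.) One may thus assume $\bar G^{\circ}$ reductive; no further reduction to the connected case is needed, since the component group is absorbed below.

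\textbf{Step 2: reduction to the normalizer of a maximal torus.} Fix a maximal torus $\bar T\subset\bar G^{\circ}$. Since any two maximal tori are $\bar G^{\circ}$-conjugate, each $\gamma$ in the absolute Galois group has a representative of $\kappa(\gamma)\in\mathrm{Out}(\bar G)$ normalizing $\bar T$, so $N:=N_{\bar G}(\bar T)$ carries a $K$-kernel structure $L_N$ and $N\hookrightarrow\bar G$ is a morphism of $K$-kernels. I claim $\eta$ is in the image of $H^2(K,L_N)\multimap H^2(K,L)$. The obstruction to such a reduction is a $K$-point on the twist, by a cocycle for $\eta$, of the projective homogeneous variety $\bar G/N$ of maximal tori; since $\kappa$ acts by honest automorphisms on the adjoint group (via a pinning), that twist is the variety of maximal tori of a genuine reductive $K$-group, which has a $K$-point because a reductive group over a field possesses a maximal $K$-torus. \emph{This is the step I expect to be the main obstacle:} one must carefully set up the dictionary between reductions of a banded gerbe and rational points on twisted quotients, and identify the relevant twist with an honest variety of maximal tori.

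\textbf{Step 3: extracting a finite nilpotent subgroup.} Now $L_N$ sits in an extension of $K$-kernels $1\to\bar T\to L_N\to\bar W\to 1$ with $\bar W:=N/\bar T$ finite. Fix a prime $\ell$ and pass to the fixed field $K_\ell$ of an $\ell$-Sylow subgroup of the absolute Galois group, so that $\mathrm{Gal}(\bar K/K_\ell)$ is pro-$\ell$. A pro-$\ell$ group acting on a finite set of order prime to $\ell$ has a fixed point; applied to $\bar W/\bar W_\ell$ for an $\ell$-Sylow $\bar W_\ell\subset\bar W$, this lets one conjugate so that over $K_\ell$ the band lands in a subgroup of shape $\bar T\rtimes\bar W_\ell$, whereupon $\bar F_\ell:=\bar T[\ell^m]\rtimes\bar W_\ell$ is, for $m$ large enough, a kernel-stable finite $\ell$-subgroup of $\bar G$ --- hence nilpotent --- with $\eta|_{K_\ell}$ in the image of $H^2(K_\ell,F_\ell)\multimap H^2(K_\ell,L)$; the size of $m$ is governed by the $\ell$-primary torsion of the classes in $H^2(K_\ell,\bar T)$ that must be killed. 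A corestriction/transfer argument then brings this back to $K$: $\eta$ over $K$ is captured by finitely many such finite $\ell$-subgroups up to prime-to-$\ell$ indices, and, using that a finite group is nilpotent precisely when it is the direct product of its Sylow subgroups, one assembles them --- all inside the single kernel-stable $N$ of Step 2 --- into one finite nilpotent $\bar F\subset\bar G$ with a kernel structure $F$ descending from the $K_\ell$'s, so that $\eta$ lies in the image of $H^2(K,F)\multimap H^2(K,L)$. The delicate remaining points are the descent of the kernel structure from $K_\ell$ to $K$ and the compatible placement of the several $\bar F_\ell$ inside $\bar G$; both are handled by keeping every Sylow inside the common normalizer $N$.
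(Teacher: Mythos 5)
First, a point of comparison: the paper does not prove this statement at all --- it is quoted verbatim from Springer (Theorem 3.4 of \cite{SpringerH2}) and then used as a black box --- so your attempt has to stand on its own, and as written it has a genuine gap at its core, namely your Step 2. A class $\eta\in H^2(K,L)$ is a $2$-cocycle relative to an outer action (a banded gerbe), not a $1$-cocycle with values in automorphisms, so there is no such thing as ``the twist of $\bar G/N$ by a cocycle for $\eta$''. The substitute you propose --- descending $\bar G/N$ using only $\kappa$ through a pinning, i.e.\ taking the variety of maximal tori of the quasi-split outer form --- identifies the wrong variety: the form of $\bar G/N$ whose $K$-points control the reduction of $\eta$ to $L_N$ depends on $\eta$, not only on the band. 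Already for a \emph{neutral} class of the band of $\mathrm{PGL}_n$ (trivial $\kappa$), given by a central simple algebra $A$, the relevant twisted quotient is the variety of maximal \'etale subalgebras of $A$, i.e.\ the variety of maximal tori of the inner form $\mathrm{PGL}(A)$, not of the split group; for non-neutral $\eta$ one cannot even write down the twist without first building the dictionary ``reductions of a banded gerbe $\leftrightarrow$ rational points of an associated twisted quotient'' (the analogue of Proposition 3.2 of \cite{DLA}, but with no ambient $K$-group available). You flag this as the expected obstacle, but it is precisely the content of Springer's theorem; without it the proof does not start. (Minor: $\bar G/N$ is not projective.)

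Step 3 has a second, independent gap: there is no corestriction in nonabelian $H^2$, so knowing for each prime $\ell$ that $\eta|_{K_\ell}$ comes from a finite $\ell$-subkernel over the Sylow field $K_\ell$ does not yield a single finite nilpotent $F$ over $K$ with $\eta$ in the image of $H^2(K,F)\multimap H^2(K,L)$. Each $\bar F_\ell$ you produce is stable only under a pro-$\ell$ Sylow subgroup of $\mathrm{Gal}(\bar K/K)$ (that is the whole point of passing to $K_\ell$), so the claimed ``descent of the kernel structure from the $K_\ell$'s'' and the compatible placement of the various $\bar F_\ell$ inside $N$ are exactly what is missing, and a restriction--corestriction argument can only recover index/norm-type consequences (the $N_q(\cdot/K)$ statements used elsewhere in the paper), not the exact image statement of the theorem. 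Finally, in Step 1 your use of Levi subgroups is unavailable in positive characteristic (Levi decompositions can fail over an algebraically closed field of characteristic $p$), so even the reduction to the reductive case is incomplete there; the parenthetical appeal to ``good behaviour of $p$-cohomology'' does not address this.
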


\begin{proof}[Proof of Theorem \ref{reduction}]
Let $K$ be a perfect field of cohomological dimension at most $q+1$ and $Z$ a homogeneous space under a smooth linear connected $K$-group $G$. We claim that we may assume that $Z$ has smooth stabilizers (which of course is automatic in characteristic 0). Indeed, consider the Frobenius twist $Z^{(p)}$ and the Frobenius morphism $Z\to Z^{(p)}$, which is surjective since $Z$ is smooth. Since the Frobenius twist is functorial and it \emph{does not} modify rational points (cf.~Expos\'e VII\textsubscript{A}, \S4.1 of \cite{SGA3}), we may replace $G$ by $G/_\text{Fr}G$, where $_\text{Fr}G$ denotes the kernel of $G\to G^{(p)}$. The stabilizers will be thus replaced by the corresponding quotient by the Frobenius kernel. By Proposition 8.3 in \textit{loc.~cit.} we know that some power of this construction will give smooth stabilizers.

Now, following Section 2.3 in \cite{DLA}, we associate to $Z$ a gerb $\M$ and an injective morphism of gerbs $\M\to\tors(G)$, where $\tors(G)$ is the trivial gerb of torsors under $G$. The gerb $\M$ represents a class $\eta\in H^2(K,L)$ for some $K$-kernel $L$ with smooth underlying $\bar K$-group. Springer's Theorem tells us then that there exists an injective morphism of gerbs $\M_F\to\M$ with $\M_F$ a finite gerb whose underlying group $\bar F$ is smooth and nilpotent. By Proposition 3.2 from \cite{DLA}, this implies the existence of a homogeneous space $Z_F$ under $G$ with finite nilpotent stabilizers and a (a fortiori surjective) $G$-morphism $Z_F\to Z$. Since clearly $N_{K'/K}(Z_F/K)\subset N_{K'/K}(Z/K)$, we may and do assume that the stabilizers of $Z$ are finite and solvable.\\

Consider then the gerb $\M$ with underlying (smooth, finite, solvable) $\bar K$-group $\bar F$. It represents a class $\eta\in H^2(K,F)$ for some finite $K$-kernel $F$. We proceed by induction on the order of $\bar F$, noting that the case of order 1 corresponds to $G$-torsors, which are already dealt with by Theorem \ref{phs}.

Denote by $F'=[F,F]$ the derived subkernel of $F$. It is well-defined since the underlying $\bar K$-subgroup $\bar F'$ is characteristic in $\bar F$. By the ``functoriality'' of non-abelian $H^2$ for surjective morphisms (cf.~\S2.2.3 in \cite{DLA}), we obtain from $\eta\in H^2(K,F)$ a class $\eta^\ab\in H^2(K,F^\ab)$, where $F^\ab$ is the finite abelian $K$-group ($=K$-kernel) naturally obtained from $F/F'$ (cf.~for instance \S1.15 of \cite{FSS}).

Choose an element $x\in K_q^M(K)$. By Corollary \ref{galcoh2}, we can find some finite extensions $K_1,...,K_r$ of $K$ and an $r$-tuple $(x_1,...,x_r) \in \prod_{i=1}^r K_q^M(K_i)$ such that:
\[\begin{cases} 
\eta^\ab|_{K_i}=0\text{ for each }i\in \{1,...,r\}, \\
x = \prod_{i=1}^r N_{K_i/K}(x_i).
\end{cases}\]
Using for instance the cocyclic approach to the classes in $H^2(K,F)$, we immediately see that the triviality of $\eta^\ab|_{K_i}$ implies that $\eta|_{K_i}$ comes from $H^2(K_i,F')$. This implies in turn that there exist $K_i$-gerbs $\M_i$ with underlying kernel $F'$ and injective morphisms of gerbs $\M_i\to\M$ which, composed with $\M\to\tors(G)$, prove by Proposition 3.2 in \cite{DLA} the existence of $G_{K_i}$-homogeneous spaces $Z_i$ with geometric stabilizers isomorphic to $\bar F'$ and $G_{K_i}$-equivariant morphisms $Z_i\to Z_{K_i}$. By the inductive assumption, we deduce that $N_q(Z_i/K_i)=K_q^M(K_i)$. We can therefore find some finite extensions $K_{i,1},...,K_{i,r_i}$ of $K_i$ and an $r_i$-tuple $(x_{i,1},...,x_{i,r_i}) \in \prod_{j=1}^{r_i} K_q^M(K_{i,j})$ such that:
\[\begin{cases} 
Z_i(K_{i,j})\neq \emptyset\text{ for each }j\in \{1,...,r_i\}, \\
x_i = \prod_{j=1}^{r_i} N_{K_{i,j}/K_i}(x_{i,j}).
\end{cases}\]
Hence $x = \prod_{i=1}^{r} \prod_{j=1}^{r_i} N_{K_{i,j}/K}(x_{i,j})$, and $x \in N_q(Z/K)$.
\end{proof}

\begin{remarque}
In the case of fields of characteristic 0, there is a second proof of our Main Theorem that completely avoids the use of gerbs by using the main result in \cite{DLA}. This result allows to reduce the proof to the cases of homogeneous spaces of $\mathrm{SL}_n$ with finite stabilizer and homogeneous spaces of semisimple simply connected groups with stabilizers of ``ssu-type''. There is still some basic non-abelian cohomology to be dealt with in the latter case, but it is much less technical (the point being that all the gerbs are hidden in the results by Demarche and the second author). The case of $\mathrm{SL}_n$ and finite stabilizers is easily reduced to the case of solvable stabilizers by a restriction-corestriction argument using the Sylow subgroups of the stabilizers. Then one can follow the techniques on universal torsors used by Harpaz and Wittenberg in \cite{hw} in order to get a proof by induction that completely avoids non-abelian cohomology.
\end{remarque}

\textit{Acknowledgements.} We thank Olivier Wittenberg, Tam\'as Szamuely and the anonymous referees for very useful comments. The second author's research was partially supported by Conicyt via Fondecyt Grant 11170016 and PAI Grant 79170034.


\begin{thebibliography}{99}

\bibitem{CTM}
J.-L. Colliot-Th\'el\`ene and D. A. Madore~: {\it Surfaces de del {P}ezzo sans point rationnel sur un corps de dimension cohomologique un}, Journal de l'Institut Math\'ematique de Jussieu {\bf 3}, No. 1, 1--16 (2004).

\bibitem{DLA} C. Demarche and G. Lucchini Arteche~: {\it Le principe de Hasse pour les espaces homog\`{e}nes : r\'eduction au cas des stabilisateurs finis}, Compsitio Mathematica, {\bf 155}, 1568--1593 (2019).

\bibitem{FSS}
Y.~Z.~Flicker, C.~Scheiderer, R.~Sujatha. {\it Grothendieck's theorem on non-abelian {$H^2$} and
local-global principles}, J.~Amer.~Math.~Soc. {\bf 11}, No.~3, 731--750 (1998).

\bibitem{gille}
P. Gille~: {\it Invariants cohomologiques de Rost en
caract\'eristique positive}, $K$-theory {\bf 21}, 57--100 (2000).

\bibitem{GS} P. Gille and T. Szamuely~: {\it Central Simple Algebras and Galois Cohomology}, Second edition. Cambridge Studies in Advanced Mathematics {\bf 165}, Cambridge University Press (2017).

\bibitem{gro}
A. Grothendieck~: {\it Le groupe de Brauer III}, in J. Giraud et al., editors, Dix  Expos\'es sur la Cohomologie des Sch\'emas, Advanced studies in mathematics {\bf 3}, 88--188 (1968).

\bibitem{hw} Y. Harpaz and O. Wittenberg~: {\it Z\'ero-cycles sur les espaces homog\`{e}nes et probl\`{e}me de Galois inverse}, preprint (2018).

\bibitem{diego}
D. Izquierdo~: {\it  On a conjecture of Kato and Kuzumaki}, Algebra and Number Theory, {\bf 12}, No. 2, 429--454 (2018).

\bibitem{Kat}
K. Kato~: {\it A generalization of local class field theory by using $K$-groups. II}, J. Fac. Sci. Univ. Tokyo Sect. IA Math., {\bf 27}, No. 3, 603--683 (1980).

\bibitem{Kato} K. Kato~: {\it Galois cohomology of complete discrete valuation fields}, in \textit{Algebraic K-Theory}, Lecture Notes in Mathematics {\bf 967}, Springer Verlag (1982).

\bibitem{KK}
K.~Kato and T.~Kuzumaki~: {\it The dimension of fields and algebraic {$K$}-theory}, Journal of Number Theory, {\bf 24}, No. 2, 229--244 (1986).

\bibitem{Mat}
H.~Matsumura~: {\it Commutative Ring Theory},  Cambridge Studies in Advanced Mathematics, Cambridge University Press, {\bf 8} (1987).

\bibitem{Mer}
A.~S.~Merkurjev~: {\it Simple algebras and quadratic forms}, Izv. Akad. Nauk SSSR Ser. Mat., {\bf 55}, No. 1, 218--224 (1991).

\bibitem{Ono}
T.~Ono~: {\it Arithmetic of Algebraic Tori}, Ann.~of Math.~(2), {\bf 74}, 101--139 (1961).

\bibitem{Riou} J. Riou~: {\it La conjecture de Bloch-Kato
[d'apr\`{e}s M. Rost et V. Voevodsky]}, Ast\'erisque { \bf 361}, Exp. No. 1073, x, 421--463 (2014).

\bibitem{SGA3}
{\it Sch\'emas en groupes}. S\'eminaire de G\'eom\'etrie Alg\'ebrique du Bois Marie 1962/64. Dirig\'e par M.~Demazure et A.~Grothendieck.
Lecture Notes in Mathematics, Nos.~{\bf 151}, {\bf 152}, {\bf 153}. Springer-Verlag, Berlin, 1970; Documents Math\'ematiques, Nos.~{\bf 7}, {\bf 8}, Soci\'et\'e Math\'ematique de France, Paris (2011).

\bibitem{corpsloc} J.-P. Serre~: {\it Local fields}, Graduate Texts in Math., {\bf 67}, Springer Verlag (1979).

\bibitem{GC} J.-P. Serre~: {\it Cohomologie Galoisienne}
(cinqui\`eme \'edition, r\'evis\'ee et compl\'et\'ee), Lecture Notes in Math. {\bf 5}, Springer Verlag (1994).

\bibitem{Serre} J.-P. Serre~: {\it Cohomologie galoisienne : progr\`es et probl\`emes}.
\textit{Ast\'erisque} {\bf 227}, Exp. No. 783, 229--257 (1995). 

\bibitem{SpringerH2}
T.~A.~Springer~: {\it Nonabelian {$H^{2}$} in {G}alois cohomology}. Algebraic {G}roups and {D}iscontinuous {S}ubgroups ({P}roc. {S}ympos. {P}ure {M}ath., {B}oulder, {C}olo., 1965), 164--182, Amer.~Math.~Soc., Providence, RI (1966).

\bibitem{springer}
T.~A.~Springer~: {\it Linear algebraic groups}, Second edition. Modern Birkh\"{a}user Classics, Birkh\"{a}user Boston, Inc., Boston, MA (2009).

\bibitem{Ste} R. Steinberg~: {\it Regular elements of semi-simple algebraic groups}, Publications Math\'ematiques de l'IHES {\bf 25}, 49--80 (1965).

\bibitem{suslin} A. Suslin~: {\it Algebraic K-theory and the norm-residue homomorphism}, Journal of Mathematical Sciences {\bf 30}, No. 6, 2556--2611  (1985).

\bibitem{sj} A. Suslin and S. Joukhovitski~: {\it Norm varieties}, Journal of Pure and Applied Algebra {\bf 206}, 245--276 (2006).

\bibitem{Wit} O. Wittenberg~: {\it Sur une conjecture de Kato et Kuzumaki concernant les hypersurfaces de Fano}, Duke Mathematical Journal {\bf 164}, No. 11, 2185--2211 (2015).




\end{thebibliography}
\end{document}